\newcommand{\C}{\mathcal}
\newcommand{\s}{\mathsf}
\newcommand{\f}{\mathfrak}
\newcommand{\R}{\mathrm}
\newcommand{\B}{\mathbf}
\newcommand{\hei}[1]{\mathfrak{ht_{\mathrm{#1}}}}
\newcommand{\HT}{\hei{\C P}}
\newcommand{\ang}[1]{\langle#1\rangle}
\newcommand{\Hom}{\R{Hom}_{\Lambda}(M_1,M_2)}
\newcommand{\Br}{\mathsf{Br}}
\newcommand{\im}[1][\iota]{\mathrm{Im}(#1)}
\newcounter{thmcount}
\newtheorem{defn}{Definition}[section]
\newtheorem{lem}[defn]{Lemma}
\newtheorem{prop}[defn]{Proposition}
\newtheorem*{prop*}{Proposition}
\newtheorem{thm}[defn]{Theorem}
\newtheorem{cor}[defn]{Corollary}
\newtheorem*{claim*}{Claim}
\theoremstyle{remark}
\newtheorem{exmp}[defn]{Example}
\newtheorem{que}[defn]{Question}
\newtheorem{rmk}[defn]{Remark}
\theoremstyle{definition}
\numberwithin{equation}{section}
\author{Suraj Mishra$^{\dagger}$ and Amit Kuber$^{\ddagger}$}
\email{suraj.mishra.4591@gmail.com,askuber@iitk.ac.in}
\address{$^\dagger$Department of Mathematics, Indian Institute of Technology, Bombay\\$^\ddagger$Department of Mathematics and Statistics, Indian Institute of Technology, Kanpur}
\title{Rooted tree modules}
\date{}
\keywords{zero-relation algebra, rooted tree module, generalized graph map, indecomposability}
\subjclass[2020]{16G20}
\begin{document}

\begin{abstract}
A \emph{rooted tree module} (RTM) $M:=M(T,F)$ over a zero-relation algebra $\Lambda:=\mathcal KQ/\langle\rho\rangle$ over a field $\mathcal K$ is given by the data of a quiver morphism $F:T\to Q$ from a rooted tree $T$ (either with a source or a sink) taking paths in $T$ to paths in $Q$ not lying in $\langle\rho\rangle$. When $\mathrm{char}(\mathcal K)\neq2$, we provide a checkable combinatorial characterization of the indecomposability of the RTM $M$ in terms of non-existence of idempotent quiver morphisms $\iota:T\to T$ satisfying $F\circ\iota=F$ and $\iota\neq 1_T$. Further, we provide an iterative method to decompose an RTM into indecomposable RTMs as well as a method to recursively construct indecomposable RTMs.
\end{abstract}

\maketitle

\section{Introduction}\label{intro}
Fix a field $\C K$ with $\mathrm{char}(\C K)\neq2$. A \emph{zero-relation algebra} $\Lambda$ is the quotient of the path algebra $\C KQ$ of a locally bound quiver $Q=(Q_0,Q_1,\varsigma,\varepsilon)$ by the ideal generated by a set $\rho$ of paths in $Q$ with length at least $2$ that gives an absolute bound on the length of a path passing through any vertex. In other words, $\Lambda\coloneqq\C KQ/\ang{\rho}$. There is an equivalence between the category $\Lambda\text{-}\mathrm{mod}$ of finite dimensional $\Lambda$-modules and the category of finite-dimensional $\C K$-representations of $(Q,\rho)$.

Let us introduce some notations associated with rooted trees before introducing the titular objects, namely rooted tree modules over $\Lambda$.
\begin{defn}
\label{defn: tree}
A \emph{tree} is a finite quiver $T=(T_0,T_1,s,t)$ whose underlying undirected graph is simply connected. Assume that $T_0\subset\mathbb N$. The tree $T$ is said to be a \emph{rooted tree} if it has a unique sink or a unique source, say $\ast\in T_0$, called the \emph{root} of the tree $T$.
\end{defn}

\begin{defn}
\label{defn: quiver morphism}
A \emph{quiver morphism} $F:T\to Q$ is a pair of functions $F_j:T_j\xrightarrow{}Q_j$ for $j=0,1$ such that $F_0\circ s=\varsigma \circ F_1$ and $F_0\circ t=\varepsilon\circ F_1$. We say that $F$ is a \emph{bound quiver morphism} if there is no path in $T$ such that $F(p)\in \ang{\rho}$.
\end{defn}
We write a bound quiver morphism as $F:T\to(Q,\rho)$, and say that $(T,F)$ is a \emph{tree over} the locally bound quiver $(Q,\rho)$ Such a bound quiver morphism induces a push-down functor $F_\lambda:\C KT\text{-}\mathrm{mod}\to\Lambda\text{-}\mathrm{mod}$ between the module categories as follows:
$$(F_\lambda((U_n)_{n\in T_0},(\varphi_a)_{a\in T_1}))_j\coloneqq\bigoplus_{n\in F^{-1}(j)}U_n,\quad (F_\lambda((U_n)_{n\in T_0},(\varphi_a)_{a\in T_1}))_\gamma\coloneqq\sum_{a\in F^{-1}(\gamma)}\varphi_a.$$

Associated to a tree $T$, there is a $|T_0|$-dimensional module $V_T\in\C KT\text{-}\mathrm{mod}$ given by the representation where all vertices of $T$ are replaced by a copy of the field $\C K$ and all arrows of $T$ are replaced by a copy of the identity morphism $1_{\C K}$.

\begin{defn}\cite[Definition~1.6]{sengupta2025generalised}
The \emph{generalized tree module} (\emph{GTM}) corresponding to a tree $(T,F)$ over $(Q,\rho)$ is the module $M(T,F)\coloneqq F_\lambda(V_T)$. If $(T,\ast)$ is a rooted tree then, by an abuse of terminology, we will refer to $M(T,F)$ as a \emph{rooted tree module} (\emph{RTM}) instead of a generalized rooted tree module.
\end{defn}

Suppose $M_1\coloneqq M(T^1,F_1)$ and $M_2\coloneqq M (T^2,F_2)$ are GTMs. The module $M_j$ is called a \emph{tree module} if $F_j$ satisfies an additional property: $$\text{ for all } a,b\in T^j_1,\text{ if } (s(a)=s(b)\text{ or }t(a)=t(b)) \text{ then } F_j(a)\neq F_j(b) .$$ It follows from \cite[\S~3.5,4.1]{gabriel1981covering} that tree modules over a zero-relation algebra are indecomposable. Crawley-Boevey \cite{crawley1989maps} gave a combinatorial description of a basis for $\Hom$ when $M_1$ and $M_2$ are tree modules. Ringel \cite{ringelexceptional} proved that all the finitely generated indecomposable \emph{exceptional modules}(=modules without self-extensions) over path algebras of quivers are GTMs; however, he referred to them as tree modules. When $M_1,M_2$ are GTMs, generalizing \cite{crawley1989maps}, Sengupta and the second author \cite{sengupta2025generalised} introduced the combinatorial concept of a generalized graph map (GGM) for the pair $(M_1,M_2)$ (\Cref{defn:ggm}), and showed, under some technical condition which they call ``ghost-freeness'' of the pair (\Cref{defn:ghost}), the finite set of GGMs spans $\Hom$ \cite[Theorem~A]{sengupta2025generalised} as a $\C K$-space.

Now suppose $T^1$ and $T^2$ be rooted trees, either both with sinks or both with sources. For the corresponding RTMs $M_1$ and $M_2$, we show in \Cref{thm: ghostfree} (sink case) and \Cref{thm: ghostfreeop} (source case) that the pair $(M_1,M_2)$ is ghost-free. These lemmas along with a sufficient condition for indecomposability of a GTM \cite[Theorem~B]{sengupta2025generalised} are some of the key ingredients in the proof of our main result, which we state now.
\begin{restatable}{theom}{mainone}\label{thm: main A}
Suppose $(T,F)$ is a rooted tree over $(Q,\rho)$. Then the following are equivalent:
\begin{enumerate}
    \item the RTM $M\coloneqq M(T,F)$ is indecomposable;
    \item there is no non-identity idempotent quiver morphism $\iota: T\to T$ satisfying $F\circ\iota=F$.
\end{enumerate}
\end{restatable}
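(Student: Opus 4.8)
The plan is to prove the contrapositive in one direction and use the structural results cited in the introduction for the other. The easy implication is $\neg(2)\Rightarrow\neg(1)$: given a non-identity idempotent quiver morphism $\iota:T\to T$ with $F\circ\iota=F$, I would first observe that $\iota$ induces an endomorphism $\widehat\iota$ of $M=M(T,F)$ by functoriality, namely $\widehat\iota = F_\lambda(V_\iota)$, where $V_\iota:V_T\to V_T$ is the obvious map of $\C KT$-representations sending the basis vector $e_n$ at vertex $n$ to $e_{\iota(n)}$. Since $\iota$ is idempotent, so is $\widehat\iota$; since $F\circ\iota=F$, this map is well-defined as a $\Lambda$-module endomorphism (the push-down identifies the copies of $\C K$ sitting over the same vertex of $Q$). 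It remains to check $\widehat\iota\neq 0$ and $\widehat\iota\neq 1_M$: non-triviality of the idempotent follows because $\iota$ fixes at least one vertex (e.g.\ the root, or any vertex in its image) so $\widehat\iota\neq0$, while $\iota\neq 1_T$ forces some vertex to be moved, and one checks this produces a basis vector not fixed by $\widehat\iota$, so $\widehat\iota\neq 1_M$. A non-trivial idempotent endomorphism splits $M$, so $M$ is decomposable. The one subtlety here is verifying that $\widehat\iota$ is genuinely not the identity: because distinct vertices of $T$ lying over the same vertex of $Q$ contribute distinct summands to $M$, moving a vertex of $T$ does move the corresponding coordinate, so this goes through.

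For the harder direction $\neg(1)\Rightarrow\neg(2)$ — if $M$ is decomposable, produce such a $\iota$ — the plan is to use the ghost-freeness of the pair $(M,M)$ established in Theorems~\ref{thm: ghostfree}/\ref{thm: ghostfreeop}, so that $\R{End}_\Lambda(M)=\R{Hom}_\Lambda(M,M)$ is spanned by generalized graph maps for $(M,M)$. If $M$ is decomposable, then $\R{End}_\Lambda(M)$ is not local, so it contains a non-trivial idempotent $\epsilon$. Expand $\epsilon$ in the GGM basis and analyze its combinatorial shape: a GGM from $T$ to $T$ is encoded by a pair of sub-(quiver-)structures of $T$ together with an isomorphism between them compatible with $F$. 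The key claim to extract is that the presence of a non-trivial idempotent forces the existence of a GGM that is, in effect, a genuine idempotent \emph{quiver endomorphism} of $T$: one shows that the identity GGM $1_T$ appears, that the ``defect'' part $\epsilon - \lambda\cdot 1_T$ (for a suitable scalar $\lambda$) is nilpotent or otherwise controllable, and that idempotency of $\epsilon$ can be transported to idempotency of the underlying quiver map. This is where the hypothesis $\R{char}(\C K)\neq2$ enters: it lets us average/solve $2\times2$ linear conditions when combining GGMs, exactly as flagged in the abstract.

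Concretely, I would set up an induction on $|T_0|$ (or on the height of the rooted tree), using the recursive structure of rooted trees: $T$ is built from the root by attaching subtrees hanging off the neighbours of $*$. A decomposition of $M$ restricts, via the push-down, to compatible data on these subtrees; by induction each piece either is indecomposable (no non-trivial $\iota$ there) or already yields a partial idempotent quiver morphism, and one glues these together at the root. The rootedness — a unique source or sink — is what makes the gluing canonical: at $*$ there is no choice of how to match up incoming/outgoing arrows beyond what $F$ dictates, so the local idempotents on the subtrees assemble into a global idempotent $\iota:T\to T$ with $F\circ\iota = F$, and it is non-identity precisely because $\epsilon$ was non-trivial. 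I expect the main obstacle to be the bookkeeping in this gluing step: one must ensure that the combinatorially-defined $\iota$ is simultaneously (a) a well-defined quiver morphism (respects $s,t$), (b) idempotent, and (c) compatible with $F$, while tracking how the scalars in the GGM expansion of $\epsilon$ constrain which vertices $\iota$ may identify. Handling the case where $F$ is not injective on stars of vertices — so that $M$ is a GTM but not a tree module, and $T$ has genuine automorphisms over $Q$ — is the delicate point, and is exactly where Theorem~B of \cite{sengupta2025generalised} and ghost-freeness do the heavy lifting.
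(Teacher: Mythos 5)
Your easy direction ($\neg(2)\Rightarrow\neg(1)$) matches the paper's \Cref{idemtoidem} in the sink case, where $v_n\mapsto v_{\iota(n)}$ is indeed an idempotent $\Lambda$-endomorphism. But as written it fails in the source case, which the theorem also covers: if $\iota$ collapses two children $m_1\neq m_2$ of a vertex $n$ with $F(a_{m_1})=F(a_{m_2})=\alpha$ onto a single child, then $\C I(\alpha\cdot v_n)$ picks up a coefficient $2$ on $v_{\iota(m_1)}$ while $\alpha\cdot\C I(v_n)$ does not, so the naive push-forward is not a module map. The paper's \Cref{idemtoidemop} instead uses the ``transpose'' $v_n\mapsto\sum_{m\in\iota^{-1}(n)}v_m$, and the verification that this is an idempotent homomorphism is a genuine (if routine) computation. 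Relatedly, $V_\iota$ is not a morphism in $\C KT\text{-}\mathrm{mod}$ (it moves vectors between distinct vertices of $T$), so ``$F_\lambda(V_\iota)$'' is not literally an application of the push-down functor; the map has to be defined and checked by hand.

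For the hard direction ($\neg(1)\Rightarrow\neg(2)$) your proposal is a strategy rather than a proof, and the steps you leave open are exactly where the content lies. You propose to expand a non-trivial idempotent $\epsilon\in\R{End}_\Lambda(M)$ in ``the GGM basis'' and transport idempotency to a quiver endomorphism; but \Cref{GGMgeneration} only gives a spanning set, not a basis, and the claims that the identity GGM appears, that the defect is nilpotent, and that idempotency descends to the combinatorics are all unproved (and the subsequent induction on $|T_0|$ with gluing at the root is not carried out either). The paper takes a different and cleaner route that never analyzes an idempotent directly: it proves the contrapositive chain ``no non-identity idempotent $\iota$'' $\Rightarrow$ ``no GGM containing a point $(n_1,n_2)$ with $n_1\neq n_2$, $\s p(n_1)=\s p(n_2)$, $F(a_{n_1})=F(a_{n_2})$'' (via the branch-embedding \Cref{embedding}) $\Rightarrow$ ``$M$ indecomposable'', where the last implication is the cited sufficient criterion \Cref{SK:thm B}; the real work is \Cref{extra condition}, which uses the height function on branches and a case analysis on how completeness is witnessed (edge versus arrow) to show that hypothesis $(a)$ of that criterion already implies hypothesis $(b)$ for RTMs. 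Without an argument replacing \Cref{extra condition} and the embedding lemma, your sketch does not establish $(1)\Rightarrow(2)$.
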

This theorem provides a checkable combinatorial criterion for testing the indecomposability of an RTM. Compare this result with the following characterization of the indecomposability of a finitely generated module.
\begin{prop}
\cite[Lemma~I.4.6, Corollary~I.4.8]{assem2006elements}\label{prop: indecomposability condn assem}
If $M$ is a finite-dimensional $\Lambda$-module, then $M$ is indecomposable if and only if the endomorphism algebra $\R{End}_\Lambda(M)$ contains only two idempotents, viz. $\B 0$ and $\B 1_M$.
\end{prop}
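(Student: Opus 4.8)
The plan is to prove this via the standard correspondence between idempotent endomorphisms of $M$ and direct-sum decompositions of $M$ as a $\Lambda$-module. The definition of indecomposability is phrased in terms of decompositions $M=M_1\oplus M_2$ into nonzero submodules, whereas the criterion is phrased in terms of idempotents in $\R{End}_\Lambda(M)$, so the entire argument amounts to translating between these two languages. I would prove both directions contrapositively: ``decomposable $\Rightarrow$ a nontrivial idempotent exists'' and ``a nontrivial idempotent exists $\Rightarrow$ decomposable''. Throughout, ``nontrivial idempotent'' means an $e\in\R{End}_\Lambda(M)$ with $e^2=e$ and $e\notin\{\B 0,\B 1_M\}$, and I take the usual convention that an indecomposable module is nonzero.

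For the first direction, suppose $M=M_1\oplus M_2$ with $M_1,M_2$ nonzero submodules. I would take $e\colon M\to M$ to be the projection onto $M_1$ along $M_2$, i.e. $e(m_1+m_2)=m_1$ for $m_i\in M_i$. Since the decomposition is one of $\Lambda$-submodules, the $\Lambda$-action is componentwise and hence $e$ is $\Lambda$-linear; the relation $e^2=e$ is immediate. As $M_1\neq 0$ we get $e\neq\B 0$, and as $M_2\neq0$ we get $e\neq\B 1_M$, so $\R{End}_\Lambda(M)$ contains a nontrivial idempotent. For the converse, suppose $e\in\R{End}_\Lambda(M)$ is idempotent with $e\neq\B 0,\B 1_M$. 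I would set $M_1:=\im[e]$ and $M_2:=\ker(e)=\im[\B 1_M-e]$; both are $\Lambda$-submodules because $e$ is $\Lambda$-linear. Every $m\in M$ satisfies $m=e(m)+(\B 1_M-e)(m)$ with the two summands in $M_1$ and $M_2$ respectively, so $M=M_1+M_2$; and if $m\in M_1\cap M_2$, writing $m=e(x)$ and using $e(m)=0$ gives $m=e(x)=e^2(x)=e(m)=0$, so the sum is direct. Finally $e\neq\B 0$ forces $M_1\neq0$ and $e\neq\B 1_M$ forces $M_2\neq0$, whence $M$ is decomposable.

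The only real obstacle is the routine bookkeeping in this translation: verifying that the projection attached to a decomposition is genuinely a $\Lambda$-module map, and conversely that $\im[e]$ and $\ker(e)$ are honest $\Lambda$-submodules forming an internal direct sum. Both verifications rest solely on $\Lambda$-linearity of $e$ together with $e^2=e$; note that finite-dimensionality of $M$ is not needed for this particular criterion (it becomes essential only for the finer Krull--Schmidt-type refinements, such as the existence of a complete set of primitive orthogonal idempotents). Since the two constructions above are mutually inverse on idempotents and decompositions, the equivalence of the two conditions follows.
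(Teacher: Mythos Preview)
Your proof is correct and is the standard argument for this classical fact. Note that the paper does not actually prove this proposition: it is quoted from \cite{assem2006elements} with a citation and no proof, so there is nothing to compare against. Your observation that finite-dimensionality is unnecessary for this particular equivalence is also correct.
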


As an application of our main result, we provide a recursive construction of indecomposable RTMs \Cref{cor: 2} (sink case) and \Cref{cor: 2op} (source case). We also show how to decompose an RTM into indecomposable direct summands \Cref{cor: 1} (sink case) and \Cref{cor: 1op} (source case). A special case of this corollary when both $T$ and $Q$ are rooted trees with sinks and $\rho=\emptyset$ \cite[Lemma~2]{katter_mahrt} was proven by Katter and Mahrt. Under the same hypotheses, Bindua et al. \cite{bindua2024} proved an interesting result about decompositions of modules in the essential image of the zero-dimensional persistence homology functor.

The rest of the paper is organized as follows. The sink case is dealt with in detail in \S~\ref{sec 2} and \ref{sec 3}. We set up notations and terminology related to generalized graph maps in \S~\ref{sec 2}. The main highlight of this section is \Cref{GGMgeneration} which shows that the Hom-set between two rooted tree modules is spanned by generalised graph maps. We complete the proof of \Cref{thm: main A} in the sink case in \S~\ref{sec 3} and prove applications of the the theory developed to decompositions of RTMs (\Cref{cor: 1}) and recursive construction of indecomposable RTMs (\Cref{cor: 2}). The changes in the definitions, statements and proofs of results for the source case are given in \S~\ref{sec 4}.

The notation $\mathbb N$ denotes the set of natural numbers that includes $0$.

\section{Spanning sets for Hom-sets between RTMs with sink}\label{sec 2}
In this section, the notation $(T,\ast)$ (possibly with decoration) will denote a rooted tree with a sink $\ast$. We recall the notations and terminology of GGMs from \cite{sengupta2025generalised} in the context of RTMs with sinks. The main goal is to prove \Cref{GGMgeneration} which shows that the finite set of GGMs is a $\C K$-spanning set for the Hom-set between two RTMs with sinks.

For $n\in T_0$, the \emph{branch} of $n$, denoted $\Br(n)=(\Br(n)_0,\Br(n)_1)$ is a subquiver of the tree $T$ with $\Br(n)_0\coloneqq\{n'\in T_0\mid\text{there is a path from } n' \text{ to } n\}$ and $\Br(n)_1\coloneqq \{a_m\mid m\in \Br(n)_0\setminus\{n\}\}$. Note that the existence of zero-length paths implies $n\in \Br(n)$ for each $n\in T_0$ so that $(\Br(n),n)$ is itself a rooted tree. If $X\subseteq T_0$, then we use the notation $\ang{X}$ to denote the subforest of $T_0$ induced by the subset $X$.

Let $(T^j,\ast_j)$ be a rooted tree for $j=1,2$. If $n\in T^j_0\setminus\{\ast_j\}$, then there is a unique arrow with source $n$; we denote by $\s p(n)$ the \emph{parent} of $n$, i.e., the target of such unique arrow. Let $a_n$ (resp. $b_n$) denote the unique arrow with source $n$ if $n\in T^1_0\setminus\{\ast_1\}$ (resp. if $n\in T^2_0\setminus\{\ast_2\}$). Define the \emph{height} function $\hei{}\colon T^j\to \mathbb{N}$ as
$\hei{}(n)\coloneqq\begin{cases}
        0 & \text{if } n=\ast_j;\\
        \hei{}(\s p(n))+1 & \text{otherwise.}
    \end{cases}$ Also set $\hei{}(T^j)\coloneqq\max\{\hei{}(n)\mid n\in T^j_0\}$.
\begin{rmk}
    For each $n\in T_0^j$, $\s p^{\hei{}(n)}(n)=\ast_j$.
\end{rmk}
Since quiver morphisms between rooted trees preserve lengths of paths, we have the following observation.
\begin{rmk}\label{rem:heightpreserve}
Suppose $\iota:\Br(n)\to\Br(m)$ is a quiver morphism for some $n\in T^1_0$ and $m\in T^2_0$. Then $\hei{}(\Br(n))\le\hei{}(\Br(m))$. 
\end{rmk}

Let $F_j:T^j\to(Q,\rho)$ be a bound quiver morphism, and $M_j\coloneqq M(T^j,F_j)$ be the corresponding RTM for $j=1,2$. Let $\{v_n\}_{n\in T_0^1}$ and $\{w_m\}_{m\in T_0^2}$ denote the natural $\C K$-bases of $M_1$ and $M_2$ respectively that are induced by the quiver morphisms.

The definition of a GGM uses the language of networks(=mixed graphs), which we define below.
\begin{defn}\cite[Definition~2.1]{sengupta2025generalised}
    A \emph{network} $\C N$ is defined as a pentuple $(\C N_0,\C N_1,\sigma,\tau,\C E)$, where $(\C N_0,\C N_1,\sigma,\tau)$ is a quiver and $(\C N_0,\C E)$ is a simple undirected graph.
\end{defn}
For $\C V\subseteq \C N_0$, the notation $\ang{\C V}$ denotes the subnetwork of $\C N$ induced by $\C V$.

Associated with the pair $(M_1,M_2)$ of RTMs are two natural networks--the first one is called the pullback network, for reasons explained later, while the other is a $2$-cover of the former.

\begin{defn}\cite[\S~2]{sengupta2025generalised}\label{pullback}
The \emph{pullback network} $\C N[1]\coloneqq (\C N[1]_0,\C N[1]_1,s^1,t^1,\C E^1)$ associated with the pair $(M_1,M_2)$ is defined as follows:
\begin{align*}
    \C N[1]_0&\coloneqq\{(n,m)\in T^1_0\times T^2_0\ |\ F_1(n)=F_2(m)\}; \ \mathrm{and}\\
    \C N[1]_1&\coloneqq\{(n,m)\xrightarrow{(a_n,b_m)}(\s p(n),\s p(m))\mid F_1(a_n)=F_2(b_m)\};\\
    \C E^1&\coloneqq\{\{(n,m),(n,m')\}\mid \s p(m)=\s p(m'), F_2(b_m)=F_2(b_{m'})\}.
\end{align*}
    
\end{defn}
The quadruple $\C P\coloneqq(\C N[1]_0,\C N[1]_1,s^1,t^1) $ will be referred to as the \emph{pullback quiver} and it is indeed a pullback as shown in \Cref{pullbacknetwork}.
\begin{figure}[H]
\centering
\begin{tikzcd}
    \C P & T^1\\
    T^2 & Q
    \arrow["\pi_1", from=1-1, to=1-2]
    \arrow["\pi_2"', from=1-1, to=2-1]
    \arrow["F_1", from=1-2, to=2-2]
    \arrow["F_2"', from=2-1, to=2-2]
    \arrow["\lrcorner"{anchor=center, pos=0.001}, draw=none, from=1-1, to=2-2]
\end{tikzcd}
\caption{Pullback quiver associated with the pair $(M_1,M_2)$}
\label{pullbacknetwork}
\end{figure}
For $(n,m)\in\C N[1]_0$, set $\hei{\C P}((n,m))\coloneqq\max\{k\mid(\s p^{k'}(n),\s p^{k'}(m))\in \C N[1]_0\text{ for all } 0\le k'\le k \}$. By an abuse of notation, we will omit the extra pair of parentheses and write $\HT ((n,m))$ as $\HT (n,m)$. If $\hei{\C P}(n,m)>0$, then define its \emph{parent} to be $\s p(n,m)\coloneqq(\s p(n),\s p(m))$.
\begin{rmk}\label{isasink}
If $(n,m)\in\C N[1]_0$, then $\s p^{\HT(n,m)}(n,m)$ is a sink in the pullback quiver $\C P$.
\end{rmk}
\begin{rmk}\label{rmk: outdegree}
    The out-degree of $(n,m)$ in $\C P$ is at most $1$, and it is $0$ if and only if $\HT(n,m)=0$.
\end{rmk}


The next observation will be crucial to determine all ``admissible walks'' in $\C N[1]$.
\begin{rmk}\label{edgetransitive}
If $\{(n,m_1),(n,m_2)\}, \{(n,m_2),(n,m_3)\}\in \C E^1$ then $\s p(m_1)=\s p(m_2)=\s p(m_3)$, and hence $\{(n,m_1),(n,m_3)\}\in \C E^1$.
\end{rmk}

\begin{exmp}\label{exmp: RTM}
Consider an RTM $M\coloneqq M(T,F)$ over the bound quiver $\C KQ/\ang{\rho}$ whose data is shown in \Cref{fig: rooted tree}: the rooted tree $(T,1)$ is shown in \Cref{subfig: tree}, the bound quiver $(Q,\rho)$ is shown in \Cref{subfig: quiver} while a bound quiver morphism $F:T\to(Q,\rho)$ is given in the caption of the figure.
\begin{figure}[H]
    \centering
    \begin{subfigure}[t]{0.51\textwidth}
        \[\begin{tikzcd}[ampersand replacement=\&,cramped]
	5 \\
	2 \& 3 \& 4 \\
	\& 1
	\arrow["{a_5}"{description}, from=1-1, to=2-1]
	\arrow["{a_2}"{description}, from=2-1, to=3-2]
	\arrow["{a_3}"{description}, from=2-2, to=3-2]
	\arrow["{a_4}"{description}, from=2-3, to=3-2]
    \end{tikzcd}\]
    \subcaption[]{A rooted tree $(T,1)$}
    \label{subfig: tree}
    \end{subfigure}~
    \begin{subfigure}[t]{0.3\textwidth}
        \[\begin{tikzcd}[ampersand replacement=\&,cramped]
	{\boldsymbol{1}} \\
	{\boldsymbol{2}}
	\arrow["\beta", from=1-1, to=2-1]
	\arrow["\alpha", from=2-1, to=2-1, loop, in=235, out=305, distance=12mm]
\end{tikzcd}\]
    \subcaption[]{Bound quiver with $\rho=\{\alpha^2\}$}
    \label{subfig: quiver}
    \end{subfigure}
\caption{Bound quiver morphism $F\colon T\to (Q,\rho)$ given by $F(3)=F(5)=\boldsymbol{1};\\ F(1)=F(2)=F(4)=\boldsymbol{2}; F(a_3)=F(a_5)=\beta; F(a_2)=F(a_4)=\alpha$}
\label{fig: rooted tree}
\end{figure}
The pullback network $\C N[1]$ associated with the pair $(M,M)$ is shown in \Cref{fig: network rooted tree}.
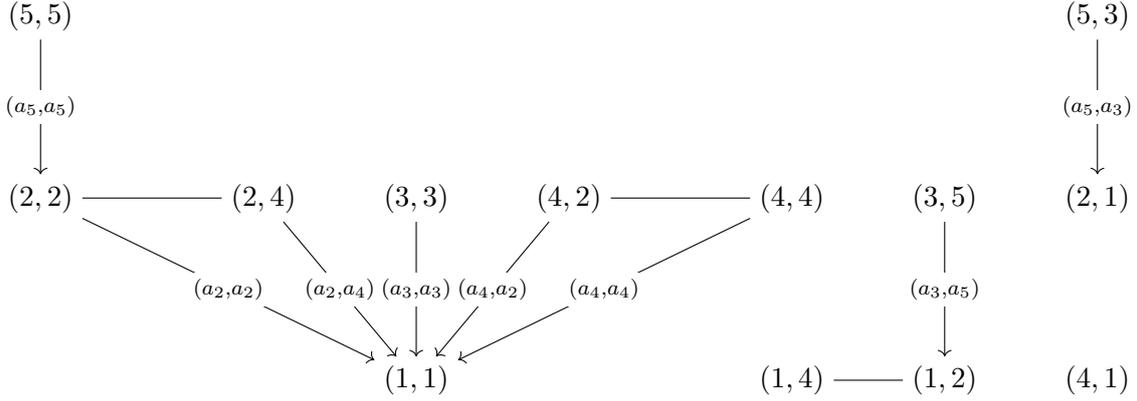
\begin{figure}[H]
    \centering
   \[\begin{tikzcd}[ampersand replacement=\&,cramped,row sep=9mm]
	{(5,5)} \&\&\&\&\&\&\&\& {(5,3)} \\
	\\
	{(2,2)} \&\& {(2,4)} \& {(3,3)} \& {(4,2)} \&\& {(4,4)} \& {(3,5)} \& {(2,1)} \\
	\\
	\&\&\& {(1,1)} \&\&\& {(1,4)} \& {(1,2)} \& {(4,1)}
	\arrow["{(a_5,a_5)}"{description}, from=1-1, to=3-1]
	\arrow["{(a_5,a_3)}"{description}, from=1-9, to=3-9]
	\arrow[no head, from=3-1, to=3-3]
	\arrow["{(a_2,a_2)}"{description}, from=3-1, to=5-4]
	\arrow["{(a_2,a_4)}"{description}, from=3-3, to=5-4]
	\arrow["{(a_3,a_3)}"{description}, from=3-4, to=5-4]
	\arrow["{(a_4,a_2)}"{description}, from=3-5, to=5-4]
	\arrow[no head, from=3-7, to=3-5]
	\arrow["{(a_4,a_4)}"{description}, from=3-7, to=5-4]
	\arrow["{(a_3,a_5)}"{description}, from=3-8, to=5-8]
	\arrow[no head, from=5-8, to=5-7]
\end{tikzcd}\]
    \caption{The network $\C N[1]$ associated with the pair $(M,M)$ from \Cref{exmp: RTM}}
    \label{fig: network rooted tree}
\end{figure}
\end{exmp}
\begin{defn}\cite[Definition~2.11]{sengupta2025generalised}
    The $2$-covering network $\C N[2]\coloneqq(\C N[2]_0,\C N[2]_1,s^2, t^2,\C E^2)$ associated with the pair $(M_1,M_2)$ is defined as follows:
    \begin{align*}
        \C N[2]_l&\coloneqq\C N[1]_l\times \{-1,1\} \mathrm{\ for\ }l=0,1,\\
        s^2(a,b,i)&\coloneqq(s^1(a,b),i) \mathrm{\ for\ } (a,b,i)\in \C N[2]_1,\\
        t^2(a,b,i)&\coloneqq(t^1(a,b),i) \mathrm{\ for\ } (a,b,i)\in \C N[2]_1,\\
        \C E^2&\coloneqq\{\{(n,m,i),(n,m',-i)\}\mid \{(n,m),(n,m')\}\in \C E^1, i\in\{-1,1\}\}.
    \end{align*}    
\end{defn}
\begin{rmk}\cite[Remark~2.12]{sengupta2025generalised}
    If $\mathrm{char}(\C K)=2$, then $\C N[1]=\C N[2]$ and hence we assumed that $\mathrm{char}(\C K)\neq2$.
\end{rmk}
\begin{rmk}\cite[Remark~2.13]{sengupta2025generalised}
    There is a canonical projection $\pi\colon\C N[2]\to \C N[1]$ given by
    \begin{eqnarray*}
        \pi((x,y,i))\coloneqq (x,y) \text{ for } (x,y,i)\in \C N[2]_0\cup\C N[2]_1
    \end{eqnarray*}
    satisfying $\pi\circ s^2=s^1\circ \pi$, $\pi\circ t^2=t^1\circ \pi$, and mapping $\C E^2$ to $\C E^1$ surjectively. Since $| \pi^{-1}(\alpha)|=2$ for $\alpha \in \C N[1]_0\cup\C N[1]_1\cup\C E^1$ is indeed a $2$-cover of $\C N[1].$
\end{rmk}

A \emph{link} in $\C N[j]$ is an element of $\C N[j]_1\sqcup\C N[j]^{-1}_1\sqcup \C E^j$, where $\C N[j]_1^{-1}$ is the set of symbols $\alpha^{-1}$ as $\alpha$ varies over $\C N[1]_1$ and the intended meaning $\alpha^{-1}$ is that of the reverse along arrow $\alpha$. We denote a link using the notation ``$--$". A \emph{traversal} $\f t$ of length $k>0$ in $\C N[j]$ is a walk of length $k$ along links that does not contain $\alpha \alpha^{-1}$ as a subtraversal for any link $\alpha$ with the convention that $(\alpha^{-1})^{-1}=\alpha$ if $\alpha$ is an arrow, and $\alpha^{-1}=\alpha$ if $\alpha$ is an edge; we treat vertices of $\C N[j]$ as zero-length traversals. The reader is referred to \cite[Definition~2.2]{sengupta2025generalised} for the complete technical definition of a traversal. Let $\f T(\C N[j])$ denote the set of all traversals in $\C N[j]$. We use the convention that a traversal is read from right to left, i.e., if $\f t=\alpha_k\cdots\alpha_1$ then the end of $\alpha_i$ is the same as the source of $\alpha_{i+1}$. Given $\f t\in\f T(\C N[j])$, we will denote its \textit{length}(=number of links contained within) by $|\f t|$ and its inverse(=walk in reverse direction) by $\f t^{-1}$. An interested reader can refer to \cite[Remark~2.3]{sengupta2025generalised} for a complete treatment of the notation.
\begin{rmk}\cite[Remark~2.5]{sengupta2025generalised}\label{rmk: at most one link}
Since $T^1$ and $T^2$ are trees, there is at most one link between two distinct vertices of $\C N[1]$, and there is no directed cycle of arrows in $\C N[1]$.
\end{rmk}

Since a finite directed graph where the out-degree of each vertex is at most $1$ is a forest of rooted trees if and only if it does not contain any directed cycle, Remarks \ref{rmk: outdegree} and \ref{rmk: at most one link} together yield the following observation.
\begin{rmk}\label{forest}
The pullback quiver $\C P$ is a forest with each connected component a rooted tree.
\end{rmk} 
In view of the above remark, the definition of the branch of a vertex could be extended to $\C P$.
\begin{exmp}
Continuing from \Cref{exmp: RTM}, observe that the pullback quiver associated with the pair $(M,M)$ is a forest with five rooted trees with roots $(1,1),(1,4),(1,2),(4,1)$ and $(2,1)$.
\end{exmp}

Associated with a subnetwork $\C M=(\C M_0,\C M_1,\C E_{\C M})$ of $\C N[2]$, there is a linear map $\C H_{\C M}:M_1\to M_2$ defined by $\C H_{\C M}(v_n)\coloneqq\sum_{(n,m,j)\in \C M_0}j w_m$. We need $\C M$ to satisfy two conditions, namely completeness and $\C R[2]$-freeness, which ensure that $\C H_{\C M}$ is a $\Lambda$-module homomorphism.
\begin{defn}\label{defn: completeness}\cite[Definition~3.2]{sengupta2025generalised}
A subnetwork $\C M=(\C M_0,\C M_1,\C E_{\C M})$ of $\C N[2]$ is said to be \emph{complete} if for each $(n,m,j)\in \C M_0$, we have the following conditions.
\begin{enumerate}
    \item If $(n'\xrightarrow{a_{n'}}n)\in T^1_1$, then there exists $(m'\xrightarrow{b_{m'}}m)\in T^2_1$ such that $$((n',m',j)\xrightarrow{(a_{n'},b_{m'},j)}(n,m,j))\in \C M_1.$$
    \item If $(m\xrightarrow{b_m}m')\in T^2_1$, then at least one of the following holds:
    \begin{enumerate}
        \item there exists $(n\xrightarrow{a_n}n')\in T^1_1$ such that $((n,m,j)\xrightarrow{(a_n,b_m,j)}(n',m',j))\in \C M_1$;
        \item there exists $(m\xrightarrow{b_m}m'\xleftarrow{b_{m''}}m'')$ in $T^2$ with $F_2(b_m)=F_2(b_{m''})$ and $(n,m'',-j)\in \C M_0$ such that $$\{(n,m,j),(n,m'',-j)\}\in \C E_{\C M}.$$
    \end{enumerate}
\end{enumerate}
\end{defn}
A \emph{triangle} in $\C N[1]$ \cite[\S~2]{sengupta2025generalised} is a subnetwork $\ang{\C V}$ such that $|\ang{\C V}|_0=|\ang{\C V}|_1+|\C E_{\ang{\C V}}|=3$. We denote the set of all triangles in $\C N[1]$ by $\triangle$. There are only two type of triangles in $\C N[1]$ as shown in \Cref{triatype}.
\begin{figure}[H]
    \centering
    \begin{subfigure}[b]{0.45\textwidth}
        \begin{tikzcd}[column sep=5mm]
            (n,m)& & (n,m')\\
            & (p(n),p(m))
            \arrow["{(a_n,b_{m})}"',from=1-1,to=2-2]
            \arrow["{(a_n,b_{m'})}",from=1-3,to=2-2]
            \arrow[no head, from=1-1,to=1-3]
        \end{tikzcd}
        \caption{Triangle with $1$ edge}
        \label{arrtria}
    \end{subfigure}%
    ~
    \begin{subfigure}[b]{0.45\textwidth}
        \begin{tikzcd}[column sep=5mm]
            (n,m)& & (n,m')\\
            & (n,m'')
            \arrow[no head,from=1-1,to=2-2]
            \arrow[no head,from=1-3,to=2-2]
            \arrow[no head, from=1-1,to=1-3]
        \end{tikzcd}
        \caption{Triangle with $3$ edges}
        \label{noarrtria}
    \end{subfigure}
    \caption{Triangles in $\C N[1]$}
    \label{triatype}
\end{figure}

In order to ensure uniqueness in each condition in the definition above, we need to ``block'' some traversals in $\C N[j]$:
\begin{align*}
\C R[1]&\coloneqq \{(n_1,m_1)--(n_2,m_2)--(n_3,m_3)\in \f T(\C N[1])\mid \ang{\{(n_k,m_k)\mid k\in\{1,2,3\}\}}\in \triangle\};\text{ and}\\
\C R[2]&\coloneqq \{\f t\in \f T(\C N[2])\mid \pi(\f t)\in \C R[1]\}.
\end{align*}
\begin{defn}\cite[Definition~3.5]{sengupta2025generalised}
For $j=\{1,2\}$, a subnetwork $\C M$ of $\C N[j]$ is said to be \emph{$\C R[j]$-free} if there is no traversal in $\C M$ that lies in $\C R[j].$
\end{defn}

Here is the promised result about certain subnetworks yielding homomorphisms.
\begin{prop}\cite[Proposition~3.9]{sengupta2025generalised}
If a subnetwork $\C M$ of $\C N[2]$ is complete and $\C R[2]$-free, then the associated linear map $\C H_{\C M}:M_1\to M_2$ is a $\Lambda$-module homomorphism.
\end{prop}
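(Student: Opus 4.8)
The plan is to verify directly that $\C H_{\C M}$ respects the $Q_0$-grading and commutes with the action of every arrow of $Q$; since $M_1$ and $M_2$ are $\Lambda$-modules, this suffices. First I would record the relevant module structure. Writing $M_j=M(T^j,F_j)=(F_j)_\lambda(V_{T^j})$, the $\C K$-space of $M_j$ at a vertex $i\in Q_0$ has basis $\{v_n\mid F_1(n)=i\}$ (resp.\ $\{w_m\mid F_2(m)=i\}$), and for $\gamma\in Q_1$ with $\varsigma(\gamma)=F_1(n)$ one has $\gamma\cdot v_n=v_{\s p(n)}$ when $n\neq\ast_1$ and $F_1(a_n)=\gamma$, and $\gamma\cdot v_n=0$ otherwise, using that in a rooted tree with sink every non-root vertex has a unique outgoing arrow; the action on $M_2$ is the symmetric statement for the arrows $b_m$. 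The grading is respected because $(n,m)\in\C N[1]_0$ forces $F_1(n)=F_2(m)$, so it only remains to prove $\C H_{\C M}(\gamma\cdot v_n)=\gamma\cdot\C H_{\C M}(v_n)$ for all $n\in T^1_0$ and all $\gamma\in Q_1$ with $\varsigma(\gamma)=F_1(n)$; I would do this by comparing, for each $p\in T^2_0$, the coefficient of $w_p$ on the two sides.

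Take first the main case $n\neq\ast_1$, $F_1(a_n)=\gamma$, and put $\bar n\coloneqq\s p(n)$, so that $\C H_{\C M}(\gamma\cdot v_n)=\C H_{\C M}(v_{\bar n})=\sum_{(\bar n,p,j)\in\C M_0}j\,w_p$ and $\gamma\cdot\C H_{\C M}(v_n)=\sum_{(n,m,j)\in\C M_0,\,m\neq\ast_2,\,F_2(b_m)=\gamma}j\,w_{\s p(m)}$. Fix $p$ and set $S\coloneqq\{(n,m,j)\in\C M_0\mid\s p(m)=p,\ F_2(b_m)=\gamma\}$, so the coefficient of $w_p$ on the right is $\sum_{(n,m,j)\in S}j$; let $A\subseteq S$ consist of the triples for which the arrow $(n,m,j)\to(\bar n,p,j)$ lies in $\C M_1$ — equivalently, those satisfying condition (2a) of completeness, the outgoing arrow of $n$ being unique. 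Completeness condition (1), applied to a triple $(\bar n,p,j)\in\C M_0$ and the arrow $a_n$, produces a child $m$ of $p$ with $F_2(b_m)=\gamma$ and $(n,m,j)\to(\bar n,p,j)$ in $\C M_1$; such $m$ is unique by $\C R[2]$-freeness, because two distinct candidates $m_1,m_2$ would give the $\C E^1$-edge $\{(n,m_1),(n,m_2)\}$, so $\ang{\{(n,m_1),(\bar n,p),(n,m_2)\}}$ is a one-edge triangle and $(n,m_1,j)--(\bar n,p,j)--(n,m_2,j)$ is a traversal in $\C M$ lying in $\C R[2]$. Hence $(n,m,j)\mapsto(\bar n,p,j)$ is a sign-preserving bijection $A\to\{(\bar n,p,j)\in\C M_0\}$, and already $\sum_{(n,m,j)\in A}j$ equals the coefficient of $w_p$ in $\C H_{\C M}(v_{\bar n})$.

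So it remains to see $\sum_{(n,m,j)\in S\setminus A}j=0$. For $(n,m,j)\in S\setminus A$, condition (2a) fails, so completeness condition (2) forces (2b): there is a child $m''\neq m$ of $p$ with $F_2(b_{m''})=\gamma$, $(n,m'',-j)\in\C M_0$, and $\{(n,m,j),(n,m'',-j)\}\in\C E_{\C M}$ (so $(n,m'',-j)\in S$). I would then check that (i) $(n,m'',-j)$ also lies in $S\setminus A$ — otherwise the arrow $(n,m'',-j)\to(\bar n,p,-j)$, this edge, and the $\C N[1]$-arrow $(n,m)\to(\bar n,p)$ again form a one-edge triangle and a forbidden traversal in $\C M$ — and (ii) the edge-partner is unique, since a second one $(n,m_1'',-j)$ would make $\ang{\{(n,m''),(n,m),(n,m_1'')\}}$ a three-edge triangle and $(n,m'',-j)--(n,m,j)--(n,m_1'',-j)$ a forbidden traversal. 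Thus $S\setminus A$ is partitioned into edge-pairs $\{(n,m,j),(n,m'',-j)\}$, each summing to $0$, and the two coefficients of $w_p$ coincide. The degenerate case $\gamma\cdot v_n=0$ (i.e.\ $n=\ast_1$, or $n\neq\ast_1$ with $F_1(a_n)\neq\gamma$) is handled identically with $A=\emptyset$: condition (2a) cannot hold for any triple $(n,m,j)\in\C M_0$ with $F_2(b_m)=\gamma$, so every such triple sits in a cancelling edge-pair and $\gamma\cdot\C H_{\C M}(v_n)=0$, matching $\C H_{\C M}(\gamma\cdot v_n)=0$.

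The module-structure computation and the bookkeeping of coefficients are routine; the content of the argument is that completeness always furnishes the required witness — an arrow of $\C M_1$ from (1), and either an arrow of $\C M_1$ or an edge of $\C E_{\C M}$ from (2) — while $\C R[2]$-freeness, via the two shapes of triangles in $\triangle$, makes these witnesses unique, upgrading the correspondences to sign-preserving bijections and fixed-point-free involutions. I expect the main obstacle to be organizing the case analysis so that the two applications of $\C R[2]$-freeness (one-edge triangles for $A\cong\{(\bar n,p,j)\in\C M_0\}$, three-edge triangles for uniqueness of edge-partners) are clearly separated, and keeping the signs straight — $j\mapsto j$ in the bijection versus $j\mapsto-j$ in the involution — where it is easiest to err.
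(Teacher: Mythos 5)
This proposition is not proved in the paper at all --- it is imported verbatim from \cite[Proposition~3.9]{sengupta2025generalised} --- so there is no in-paper argument to compare yours against; I can only assess your proof on its own terms, and it is correct. Your reduction to matching the coefficient of each $w_p$, the sign-preserving bijection $A\to\{(\bar n,p,j)\in\C M_0\}$ supplied by completeness condition (1) (with injectivity from the one-edge triangles of \Cref{arrtria}), and the fixed-point-free, sign-reversing involution on $S\setminus A$ supplied by completeness condition (2b) (with well-definedness from the three-edge triangles of \Cref{noarrtria}, and the check that edge-partners cannot land in $A$) together give exactly the cancellation the definitions of completeness and $\C R[2]$-freeness are engineered to produce; the degenerate case $\gamma\cdot v_n=0$ is also handled correctly, since (2a) is then vacuously unavailable. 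The only point worth making explicit in a written version is the observation that a representation morphism of $(Q,\rho)$ between modules on which $\rho$ already acts by zero is automatically a $\Lambda$-module homomorphism, which you do note.
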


Now we prove a key technical result that characterizes all $\C R[j]$-free traversals in $\C N[j]$.
\begin{prop}\label{key}
Suppose $\f t\coloneqq\alpha_k\cdots\alpha_1\in\f T(\C N[j])$ is $\C R[j]$-free for $j\in\{1,2\}$ and $k>1$. Then the following are true.
\begin{enumerate}
    \item If $1\leq i<k$ and $\alpha_i\notin\C N[j]_1$, then $\alpha_{i+1}\in\C N[j]_1^{-1}$.
    \item If $1<i\leq k$ and $\alpha_i\notin\C N[j]_1^{-1}$, then $\alpha_{i-1}\in\C N[j]_1$.
\end{enumerate}
\end{prop}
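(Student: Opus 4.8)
The plan is to reduce the statement to part~(1) for $\C N[1]$, and then to prove~(1) by a short case check on what kind of link $\alpha_{i+1}$ can be, using Remarks~\ref{rmk: outdegree} and~\ref{edgetransitive} and the fact (\Cref{triatype}) that $\C N[1]$ has only two shapes of triangle.

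First I would cut down the generality. Since $\pi\colon\C N[2]\to\C N[1]$ is a graph covering, it carries traversals to traversals and preserves the type (forward arrow, reverse arrow, edge) of every link; and $\C R[2]=\{\f t\in\f T(\C N[2])\mid\pi(\f t)\in\C R[1]\}$. Hence $\f t$ is $\C R[2]$-free iff $\pi(\f t)$ is $\C R[1]$-free, and the conclusion of the proposition for $\pi(\f t)$ transfers link-by-link to $\f t$, so we may take $j=1$. Next, $\C R[1]$ is closed under reversing a traversal, because its defining clause only constrains the underlying vertex set, which is a triangle; so part~(2) for a traversal is precisely part~(1) applied to its inverse $\f t^{-1}=\alpha_1^{-1}\cdots\alpha_k^{-1}$. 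It remains to prove~(1) in $\C N[1]$.

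For this, fix $1\le i<k$ with $\alpha_i\notin\C N[1]_1$ and let $w$ be the vertex at which $\alpha_i$ ends and $\alpha_{i+1}$ begins. By Remark~\ref{rmk: outdegree}, $w$ has at most one outgoing arrow in $\C N[1]$, so every link at $w$ is either this outgoing arrow, an edge $\{w,w'\}\in\C E^1$, or a reverse arrow; I would rule out the first two options for $\alpha_{i+1}$, which forces $\alpha_{i+1}$ to be a reverse arrow (it exists since $i+1\le k$). If $\alpha_i$ is itself a reverse arrow, then $\alpha_i^{-1}$ is exactly the outgoing arrow of $w$, so $\alpha_{i+1}\ne\alpha_i^{-1}$ rules that out; and if $\alpha_{i+1}$ were an edge $\{w,w'\}$ with $w=(n,m)$, $w'=(n,m')$, then the clause defining $\C E^1$ gives $F_2(b_{m'})=F_2(b_m)$, which together with $F_1(a_n)=F_2(b_m)$ (valid because the outgoing arrow of $w$ exists, being $\alpha_i^{-1}$) yields the arrow $w'\to\s p(w)$, making $\ang{\{\s p(w),w,w'\}}$ a triangle as in \Cref{arrtria} that the subtraversal $\alpha_{i+1}\alpha_i$ of $\f t$ runs through completely --- contradicting $\C R[1]$-freeness. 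If instead $\alpha_i$ is an edge $\{w',w\}$ with $w'=(n,m')$, $w=(n,m)$, then an outgoing arrow $w\to\s p(w)$ for $\alpha_{i+1}$ again produces, by the same label bookkeeping, the arrow $w'\to\s p(w)$ and hence a \Cref{arrtria}-triangle on $\{w',w,\s p(w)\}$ traversed by $\alpha_{i+1}\alpha_i$; and an edge $\{w,w''\}$ for $\alpha_{i+1}$, where $w''\ne w'$ since otherwise $\alpha_{i+1}=\alpha_i^{-1}$, produces by Remark~\ref{edgetransitive} the edge $\{w',w''\}\in\C E^1$, hence a triangle as in \Cref{noarrtria} on $\{w',w,w''\}$, again traversed by $\alpha_{i+1}\alpha_i$. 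Both contradict $\C R[1]$-freeness, so $\alpha_{i+1}$ is a reverse arrow.

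The main obstacle is not the logic, which is short, but the bookkeeping inside the two ``triangle with one edge'' cases: one has to check that the missing arrow really lies in $\C N[1]_1$, which is exactly where the point of the edge relation in $\C E^1$ --- that it records $F_2(b_m)=F_2(b_{m'})$, so $F_1(a_n)=F_2(b_m)$ propagates to $F_1(a_n)=F_2(b_{m'})$ --- gets used; and one must note that in each configuration the three vertices are pairwise distinct (immediate, since no tree vertex is its own parent and an $\C E^1$-edge has distinct endpoints), so that one genuinely has a triangle in the sense of \Cref{triatype}. Beyond that, everything is routine tracking of arrow directions.
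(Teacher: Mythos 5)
Your proof is correct and follows essentially the same route as the paper's: reduce (2) to (1) via $\f t^{-1}$, pass between $\C N[1]$ and $\C N[2]$ via $\pi$, and then rule out $\alpha_{i+1}$ being a forward arrow or an edge by cases on whether $\alpha_i$ is an edge or a reverse arrow, using \Cref{rmk: outdegree}, \Cref{edgetransitive} and $\C R[j]$-freeness against the two triangle shapes. The only differences are cosmetic: the paper works in $\C N[2]$ and projects down rather than the reverse, and you are somewhat more explicit in verifying that the forbidden configurations really induce triangles.
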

\begin{proof}
We only prove the first statement; the second statement can be proven by applying the first to the traversal $\f t^{-1}$ for the index $k-i+1$. Furthermore, note that it is enough to prove the result for $j=2$; the proof for $j=1$ can be obtained from it by applying the projection $\pi$.

There are two cases.

\noindent{\textbf{Case 1:}} Suppose $\alpha_i\coloneqq\{(n,m_1,l),(n,m_2,-l)\}\in\C E^2$ for some $l\in\{-1,1\}$.

If $\alpha_{i+1}\in\C E^2$, then \Cref{edgetransitive} guarantees that $\alpha_{i+1}\alpha_i\in\C R[2]$, a contradiction. Thus $\alpha_{i+1}\notin\C E^2$. Thus, without loss of generality, we may assume that $s^2(\alpha_{i+1})=(n,m_2,-l)$.

If $\alpha_{i+1}\in\C N[2]_1$, then \Cref{rmk: outdegree} ensures that $(n,m_2,-l)\in\C N[2]_0$ and $\alpha_{i+1}=(a_n,b_{m_2},-l)$. But since $\alpha_{i+1}\alpha_i\in\C R[2]$, we get a contradiction yet again, thus completing the proof for this case.

\noindent{\textbf{Case 2:}} Suppose $\alpha_i^{-1}=(a_n,b_m,l)\in\C N[2]_1$ for some $l\in\{-1,1\}$.

If $\alpha_{i+1}\in\C N[2]_1$, then \Cref{rmk: outdegree} ensures that $\alpha_{i+1}=\alpha_i^{-1}$, which is a contradiction to the definition of a traversal.

If $\alpha_{i+1}\in\C E^2$, then $\alpha_{i+1}=\{(n,m,l),(n,m',-l)\}$ for some $m'\in T^2_0$. But then $\alpha_{i+1}\alpha_i\in\C R[2]$, a contradiction that completes the proof of this case as well as that of the proposition.
\end{proof}

As an immediate consequence of the above, the number of edges in an $\C R[j]$-free traversal could be bounded above.
\begin{cor}\label{only1edge}
If $\f t\in\f T(\C N[j])$ is $\C R[j]$-free for some $j\in\{1,2\}$, then $\f t$ contains at most one edge.
\end{cor}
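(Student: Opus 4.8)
The plan is to deduce \Cref{only1edge} directly from \Cref{key} by a short combinatorial argument on the positions of the edges within an $\C R[j]$-free traversal $\f t = \alpha_k \cdots \alpha_1$. First I would dispose of the trivial cases: if $|\f t| = 0$ then $\f t$ is a vertex and contains no edges, and if $|\f t| = 1$ then it contains at most one link, hence at most one edge. So assume $k > 1$ and suppose, for contradiction, that $\f t$ contains (at least) two edges; say $\alpha_i$ and $\alpha_j$ are edges with $i < j$.

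The key step is to show that two edges cannot occur at positions $i < j$ in an $\C R[j]$-free traversal. Since $\alpha_i$ is an edge, $\alpha_i \notin \C N[j]_1^{-1}$, so part (2) of \Cref{key} (applicable because $1 < i$ would be needed — careful here: if $i = 1$ part (2) does not apply, so I would instead use part (1)). Let me restructure: since $\alpha_i \notin \C N[j]_1$ and $i < k$ (as $i < j \le k$), part (1) of \Cref{key} forces $\alpha_{i+1} \in \C N[j]_1^{-1}$. Now I claim that once a link in the traversal lies in $\C N[j]_1^{-1}$, every subsequent link must also lie in $\C N[j]_1^{-1}$: indeed if $\alpha_\ell \in \C N[j]_1^{-1}$ with $\ell < k$, then $\alpha_\ell \notin \C N[j]_1$, so part (1) again gives $\alpha_{\ell+1} \in \C N[j]_1^{-1}$, and induction upward from $\ell = i+1$ shows $\alpha_{i+1}, \alpha_{i+2}, \dots, \alpha_k$ all lie in $\C N[j]_1^{-1}$. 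In particular $\alpha_j \in \C N[j]_1^{-1}$, contradicting that $\alpha_j$ is an edge. This completes the proof.

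I expect the main (and only real) obstacle to be the bookkeeping at the boundary index $i = 1$ and making sure part (1) of \Cref{key} is being applied with a valid index range $1 \le i < k$; this is automatic once one observes that the earlier of the two edges sits at a position strictly less than $k$ because there is a later edge at position $j \le k$. Everything else is a one-line induction "propagating $\C N[j]_1^{-1}$ to the right" built from repeated application of \Cref{key}(1), so no genuine calculation is required.
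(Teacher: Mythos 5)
Your proof is correct and follows essentially the same route as the paper: both arguments propagate membership in $\C N[j]_1^{-1}$ rightward from an edge by repeated application of \Cref{key}(1) (the paper also propagates membership in $\C N[j]_1$ leftward via \Cref{key}(2), but your contradiction setup makes that half unnecessary). The only blemish is notational: you reuse the letter $j$ both for the network index in $\C N[j]$ and for the position of the second edge, which should be renamed.
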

\begin{proof}
We prove the result only for $j=1$; the other case has analogous proof.

Let $k\coloneqq|\f t|$. If $k\leq 1$, then the conclusion is obvious. Hence assume $\f t=\alpha_k\cdots\alpha_1$ for $k>1$, and $\alpha_q=\{(n,m_1),(n,m_2)\}\in\C E^1$ for some $1\leq q\leq k$. Since $\alpha_q\in\C E^1$, we have $\s p(m_1)=\s p(m_2)$.

If $q<k$, then repeated applications of \Cref{key}(1) yield that $\alpha_i\in\C N[1]^{-1}$ for each $q<i\leq k$.

Similarly, if $q>1$, then repeated applications of \Cref{key}(2) yield that $\alpha_i\in\C N[1]$ for each $1\leq i<q$. This completes the proof. 
\end{proof}

\Cref{fig: R1freetraversals} shows the sketches of all $\C R[1]$-free traversals in $\C N[1]$.

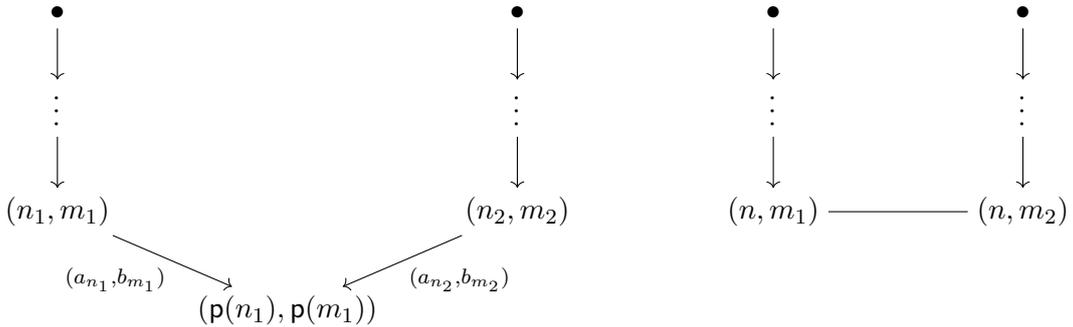
\begin{figure}[H]
    \centering
        \[\begin{tikzcd}[ampersand replacement=\&,cramped]
	\bullet \&\& \bullet \&\& \bullet \&\& \bullet \\
    \rotatebox{90}{$\cdots$} \&\& \rotatebox{90}{$\cdots$} \&\& \rotatebox{90}{$\cdots$} \&\& \rotatebox{90}{$\cdots$} \\
	{(n_1,m_1)} \&\& {(n_2,m_2)} \&\& (n,m_1)\&\& (n,m_2)\\
	\& {(\s p(n_1),\s p(m_1))}
	\arrow[from=1-1, to=2-1]
	\arrow[from=1-3, to=2-3]
	\arrow[from=1-5, to=2-5]
	\arrow[from=1-7, to=2-7]
	\arrow[from=2-1, to=3-1]
	\arrow[from=2-3, to=3-3]
	\arrow["{(a_{n_1},b_{m_1})}"', from=3-1, to=4-2]
	\arrow["{(a_{n_2},b_{m_2})}", from=3-3, to=4-2]
    \arrow[from=2-5,to=3-5]
    \arrow[from=2-7,to=3-7]
	\arrow[no head, from=3-5, to=3-7]
\end{tikzcd}\]
    \caption{Sketches of $\C R[1]$-free traversals in $\C N[1]$: (L) w/o an edge when $n_1\neq n_2$, (R) with an edge}
    \label{fig: R1freetraversals}
\end{figure}
\begin{exmp}\label{exp:R1free}
Continuing from \Cref{exmp: RTM}, there are thirteen maximal $\C R[1]$-free traversals in $\C N[1]$--three of them are the smaller connected components while the remaining ten are included within the largest connected component. Two $\C R[1]$-free traversals included within the largest component are shown in \Cref{fig: R1free exmp}.
\end{exmp}
    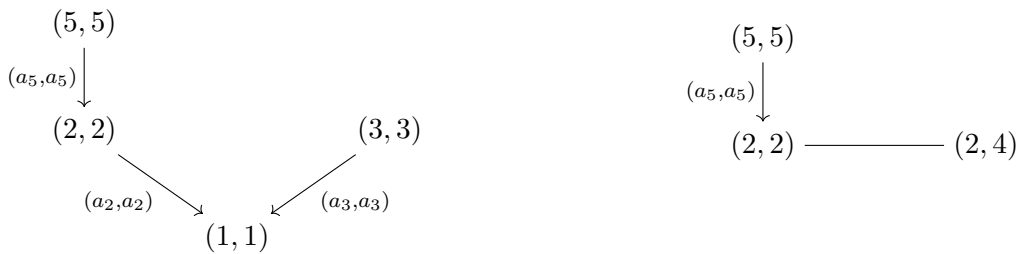
\begin{figure}[H]
        \centering
        \begin{subfigure}{0.495\textwidth}
            \[\begin{tikzcd}[ampersand replacement=\&,cramped,row sep=8mm]
	{(5,5)} \\
	{(2,2)} \&\& {(3,3)} \\
	\& {(1,1)}
	\arrow["{(a_5,a_5)}"', from=1-1, to=2-1]
	\arrow["{(a_2,a_2)}"', from=2-1, to=3-2]
	\arrow["{(a_3,a_3)}", from=2-3, to=3-2]
\end{tikzcd}\]
        \end{subfigure}~
        \begin{subfigure}{0.495\textwidth}
            \[\begin{tikzcd}[ampersand replacement=\&,cramped,row sep=8mm]
	{(5,5)} \\
	{(2,2)} \&\& {(2,4)} \\
	\arrow["{{(a_5,a_5)}}"', from=1-1, to=2-1]
    \arrow[no head, from=2-1,to=2-3]
\end{tikzcd}\]
        \end{subfigure}
        \caption{Two $\C R[1]$-free traversal in the network $\C N[1]$ from \Cref{exmp: RTM}}
        \label{fig: R1free exmp}
    \end{figure}
Say that a subnetwork $\C M$ of $\C N[2]$ is \emph{involution-free} if $\C M_0\cap\{(n,m,-j)\mid (n,m,j)\in \C M_0\}=\emptyset$, and \emph{connected} if there is a traversal between any two of its vertices.
\begin{defn}\cite[Definition~3.10]{sengupta2025generalised}\label{defn:ggm}
A \emph{generalized graph map} (\emph{GGM}) $\C G$ from $M_1$ to $M_2$ is a non-empty complete, connected, $\C R[2]$-free and involution-free subnetwork of $\C N[2]$.    
\end{defn}

If $\C G$ is a GGM from $M_1$ to $M_2$, then so is $-\C G$, where third entries of all the elements in $\C G_0,\C G_1$ and $\C E_{\C G}$ have their signs reversed.

There could be some problematic subnetworks of $\C N[2]$ which satisfy completeness, $\C R[2]$-freeness and connectedness, and yet their associated homomorphisms are identically $\boldsymbol{0}$.
\begin{defn}\cite[Definition~5.1]{sengupta2025generalised}\label{defn:ghost}
A non-empty, connected, complete and $\C R[2]$-free subnetwork $\C M\coloneqq(\C M_0,\C M_1,\C E_{\C M})$ of $\C N[2]$ is called a \emph{ghost} if $\C H_{\C M}=\boldsymbol{0}$. Say that the pair $(M_1,M_2)$ of RTMs is \emph{ghost-free} if $\C N[2]$ does not include a ghost.
\end{defn}
\begin{rmk}\label{ghostproperties}
    A ghost $\C M=(\C M_0,\C M_1,\C E_{\C M})$  satisfies the following properties: 
    \begin{enumerate}
        \item (Involution-invariant) If $(n,m,j)\in \C M_0$ then $(n,m,-j)\in \C M_0$.
        \item (Non-empty edge set) $\C E_{\C M}\neq\emptyset$.
    \end{enumerate}
\end{rmk}
Now we are ready to state and prove the main results of this section.
\begin{lem}\label{thm: ghostfree}
The pair $(M_1,M_2)$ of RTMs is ghost-free.
\end{lem}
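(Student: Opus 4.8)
The plan is to assume for contradiction that $\C N[2]$ contains a ghost $\C M=(\C M_0,\C M_1,\C E_{\C M})$; by \Cref{ghostproperties} it is then involution-invariant and $\C E_{\C M}\neq\emptyset$. The structural backbone of the argument is that the arrow part $(\C M_0,\C M_1)$ is a sub-forest of $\C N[2]_1$, the latter being two disjoint copies (one per sheet $j\in\{-1,1\}$) of the forest $\C P$ of \Cref{forest}: every vertex has out-degree at most one (\Cref{rmk: outdegree}), the sheet is unchanged along an arrow and hence constant on each tree of this forest, and each tree has a unique \emph{root}, meaning its unique vertex of out-degree $0$. After a counting argument pins down $\C M$ exactly, involution-invariance will deliver the contradiction.

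The first step consists of two ``triangle-forcing'' observations that rest only on $\C R[2]$-freeness. (i) If $(n,m,j)$ is an endpoint of an edge $\{(n,m,j),(n,m',-j)\}\in\C E_{\C M}$, then $(n,m,j)$ has no out-arrow in $\C M_1$: its only possible out-arrow is $(a_n,b_m,j)\colon(n,m,j)\to(\s p(n),\s p(m),j)$, and the length-$2$ traversal obtained by following the reverse of this arrow and then the edge lies in $\C M$, but its image under $\pi$ has the three distinct vertices $(\s p(n),\s p(m)),\,(n,m),\,(n,m')$, which (using $\s p(m)=\s p(m')$ and $F_2(b_m)=F_2(b_{m'})$, both encoded in $\{(n,m),(n,m')\}\in\C E^1$) induce a triangle of type \Cref{arrtria}; so that traversal lies in $\C R[2]$, a contradiction. (ii) A vertex lies on at most one edge of $\C E_{\C M}$: two distinct edges at a common vertex give a length-$2$ traversal whose $\pi$-image has three vertices pairwise joined by edges (the third adjacency coming from \Cref{edgetransitive}), hence inducing a triangle of type \Cref{noarrtria}. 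By (i) every edge-incident vertex is a root of the arrow-forest, and by (ii) $\C E_{\C M}$ is a matching on the edge-incident vertices; writing $e$ for their number, $|\C E_{\C M}|=e/2$.

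Next I would classify the roots. If a root $(n,m,j)$ has $m\neq\ast_2$, then condition~(2) of \Cref{defn: completeness} applies, its clause~(a) fails since $(n,m,j)$ has no out-arrow, so clause~(b) holds and $(n,m,j)$ is edge-incident. A root with $m=\ast_2$ has no out-arrow anywhere in $\C N[2]$ and is never edge-incident ($\ast_2$ has no sibling in $T^2$), and by involution-invariance such vertices occur in pairs, say $2s$ of them. Hence the arrow-forest has $|\C M_0|-|\C M_1|=e+2s$ trees, and connectedness of $\C M$ forces $|\C E_{\C M}|\geq e+2s-1$. Together with $|\C E_{\C M}|=e/2$ and $e\geq2$ (as $\C E_{\C M}\neq\emptyset$) this yields $s=0$ and $e=2$: $\C M$ has exactly one edge $\{(n,m_1,i),(n,m_2,-i)\}$ with $m_1\neq m_2$, whose endpoints are the roots of the only two trees $\C T,\C T'$ of the arrow-forest; $\C T$ lies entirely in sheet $i$, $\C T'$ in sheet $-i$, and $\C M_0=\C T_0\sqcup\C T'_0$.

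To conclude, involution-invariance gives $(n',m',i)\in\C T_0\iff(n',m',-i)\in\C M_0\iff(n',m',-i)\in\C T'_0$, so $\pi(\C T_0)=\pi(\C T'_0)=:N$; since $\pi$ is injective on each sheet, it carries $\C T$ and $\C T'$ isomorphically onto connected subquivers of $\C P$ with vertex set $N$, which therefore both coincide with the subquiver of $\C P$ induced on $N$ — a tree, having a unique vertex $r$ of out-degree $0$. Then $r$ equals both $\pi(n,m_1,i)=(n,m_1)$ and $\pi(n,m_2,-i)=(n,m_2)$, whence $m_1=m_2$, contradicting that $\{(n,m_1),(n,m_2)\}$ is an edge of the simple graph $(\C N[1]_0,\C E^1)$. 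The most delicate step is the root classification and the count it feeds: one must be scrupulous that ``root'' always means the unique out-degree-$0$ vertex of a tree, that edge-incidence forces a vertex to be exactly such a root, and that the vertices over $\ast_2$ are the only other roots — it is precisely this dichotomy that collapses the counting inequality to a single edge.
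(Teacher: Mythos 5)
Your proof is correct, but it takes a genuinely different route from the one in the paper. The paper's argument is local and path-theoretic: it fixes a single edge $\{(n,m,j),(n,m',-j)\}\in\C E_{\C M}$, invokes involution-invariance to place $(n,m',j)$ in $\C M_0$, and then uses the structure theorem for $\C R[2]$-free traversals (\Cref{key}) to show that no traversal in $\C M$ starting at $(n,m,j)$ can reach $(n,m',j)$, contradicting connectedness. You instead run a global counting argument: your triangle-forcing observations (which re-derive, in the special case of length-$2$ traversals touching an edge, essentially the same constraints that \Cref{key} encodes) show that edge-incident vertices are exactly the out-degree-$0$ vertices of the arrow-forest not lying over $\ast_2$ and that $\C E_{\C M}$ is a perfect matching on them; combining $|\C E_{\C M}|=e/2$ with the connectedness bound $|\C E_{\C M}|\ge e+2s-1$ collapses $\C M$ to two trees joined by one edge, and involution-invariance plus the uniqueness of the out-degree-$0$ vertex of the common $\pi$-image forces $m_1=m_2$, which is absurd. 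I checked the delicate points — that edge-incidence forces a root via completeness clause (2), that vertices over $\ast_2$ carry neither out-arrows nor edges, that $\pi(\C T)$ and $\pi(\C T')$ both equal the induced subquiver of the forest $\C P$ on $N$ — and they all hold. What your approach buys is independence from \Cref{key} and \Cref{only1edge} and a sharper structural conclusion (a would-be ghost must be two trees glued along one edge); what it costs is more bookkeeping, whereas the paper's traversal argument reuses machinery already needed elsewhere in \S~\ref{sec 2}.
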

\begin{proof}
Suppose for the sake of contradiction that $\C M=(\C M_0,\C M_1,\C E_{\C M})$ is a ghost in $\C N[2]$. Then \Cref{ghostproperties}(2) yields $\C E_{\C M}\neq \emptyset$. Let $\beta\coloneqq\{(n,m,j),(n,m',-j)\}\in \C E_{\C M}$. Then \Cref{ghostproperties}(1) yields that $(n,m',j)\in\C M_0$ and the connectedness of $\C M$ ensures that there is a traversal from $(n,m,j)$ to $(n,m',j)$ in $\C M$.

Let $\f t\coloneqq\alpha_k\cdots\alpha_1$ be a traversal in $\C M$ with $k\geq 1$ and $s^2(\f t)=(n,m,j)$. Since $\C M$ is $\C R[2]$-free, so is $\f t$. We will show that $t^2(\f t)\neq(n,m',j)$.

Since $\beta\in\C E_{\C M}$, we have $\s p(m)=\s p(m')$. Thus, there is no arrow from $m$ to $m'$ in $T^2$. Hence, $k>1$.

Since $\beta\in\C E_{\C M}$, $\C M$ is $\C R[2]$-free and $(a_n,b_m,j)\beta\in\C R[2]$, we conclude $(a_n,b_m,j)\notin\C M_1$. In particular, $\alpha_1\neq(a_n,b_m,j)$. But then \Cref{rmk: outdegree} ensures that $\alpha_1\notin\C N[2]_1$. Therefore, all the hypotheses of \Cref{key}(1) are satisfied for $\f t$, which then yields that only two cases are possible.

Suppose $\alpha_1\in\C N[2]_1^{-1}$. Then repeated applications of \Cref{key}(1) yield that $\alpha_i\in\C N[2]_1^{-1}$ for each $i$. In other words, $\pi(t^2(\alpha_i))\in\Br(n,m)_0$ for each $i$. Since $(n,m')\notin\Br(n,m)_0$, we get $t^2(\f t)\neq(n,m',j)$.

Suppose $\alpha_1\in\C E^2$. If $k>1$, then repeated applications of \Cref{key}(1) yield that $\alpha_i\in\C N[2]_1^{-1}$ for each $i>1$. In other words, for each $i\geq1$ we have $t^2(\alpha_i)=(n'',m'',-j)$. Thus, clearly $t^2(\f t)\neq(n,m',j)$.
\end{proof}

\Cref{thm: ghostfree} together with \cite[Theorem~A]{sengupta2025generalised} yields the following.
\begin{cor}\label{GGMgeneration}
Suppose $(M_1,M_2)$ is a pair of RTMs with sinks. Then $\Hom$ is the $\C K$-span of the set $\{\C H_{\C G}\mid\C G\text{ is a GGM from }M_1\text{ to }M_2\}$.
\end{cor}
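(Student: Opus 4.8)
The plan is to treat this as an immediate consequence of \Cref{thm: ghostfree} together with the main spanning theorem of \cite{sengupta2025generalised}, so the work is really to check that the hypotheses of that theorem are in place. First I would observe that every RTM is in particular a GTM: the rootedness of the underlying tree plays no role in the definition $M(T,F)=F_\lambda(V_T)$, so $M_1\coloneqq M(T^1,F_1)$ and $M_2\coloneqq M(T^2,F_2)$ are GTMs over the zero-relation algebra $\Lambda$, and the entire combinatorial apparatus of \cite{sengupta2025generalised} — the networks $\C N[1]$ and $\C N[2]$, generalized graph maps, and ghosts — applies to the pair $(M_1,M_2)$ verbatim. Indeed, we have recalled all of it above, specialized to the rooted setting, with Definitions \ref{defn:ggm} and \ref{defn:ghost} quoted directly from that paper.

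Next I would invoke \Cref{thm: ghostfree}, which we have just proven: the pair $(M_1,M_2)$ of RTMs with sinks is ghost-free, i.e. $\C N[2]$ contains no ghost. This is exactly the technical hypothesis under which \cite[Theorem~A]{sengupta2025generalised} asserts that the finite set $\{\C H_{\C G}\mid\C G\text{ is a GGM from }M_1\text{ to }M_2\}$ $\C K$-spans $\Hom$. Finiteness of this set is not an issue, since $T^1$ and $T^2$ are finite quivers, hence $\C N[2]$ is a finite network and has only finitely many subnetworks. Combining these two inputs yields the claim.

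The only point that warrants care — and it is bookkeeping rather than mathematics — is confirming that the notions of completeness, $\C R[2]$-freeness, GGM, and ghost being used here agree with those of \cite{sengupta2025generalised} in the generality required by their Theorem~A; since all the relevant definitions are cited verbatim from that source, there is no genuine obstacle. The substantive content of the corollary has already been discharged in \Cref{thm: ghostfree}, and this statement merely packages it with the known spanning result.
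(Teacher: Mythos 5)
Your proposal is correct and matches the paper exactly: the corollary is obtained by combining \Cref{thm: ghostfree} with \cite[Theorem~A]{sengupta2025generalised}, which is precisely the paper's one-line derivation. The additional bookkeeping you include (RTMs are GTMs, finiteness of the set of GGMs) is harmless and accurate.
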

\begin{exmp}\label{exmp: all GGMs}
Continuing from \Cref{exp:R1free}, we note that there are only ten GGMs from the RTM $M$ (see \Cref{fig: rooted tree}) to itself--the complete set is $\{\pm\C G^i\mid 1\leq i\leq 5\}$, where all $\C G^i$ are shown in \Cref{fig: all GGMs} so that $\R{End}_{\Lambda}(M)=\mathrm{span}_{\C K}\{\C H_{\C G^i}\mid 1\leq i\leq 5\}$. Note that no GGM can contain $(2,4,j)$ and $(1,4,j)$ for any $j\in\{-1,1\}$ for the completeness condition at those points for the incoming arrows at $2$ and $1$ respectively are not satisfied.
\end{exmp}
    \begin{figure}[H]
        \centering
    \begin{subfigure}[b]{0.499\textwidth}
        \centering
        \begin{tikzcd}[ampersand replacement=\&,cramped,column sep=7mm,row sep=6mm]
	{(5,5,1)} \\
	\\
	{(2,2,1)} \&\&\& {(3,3,1)} \&\&\& {(4,4,1)} \\
	\\
	\&\&\& {(1,1,1)}
	\arrow["{{(a_5,a_5,1)}}"{description}, from=1-1, to=3-1]
	\arrow["{{(a_2,a_2,1)}}"{description}, from=3-1, to=5-4]
	\arrow["{{(a_3,a_3,1)}}"{description}, from=3-4, to=5-4]
	\arrow["{{(a_4,a_4,1)}}"{description}, from=3-7, to=5-4]
\end{tikzcd}
        \caption{$\C G^1$}
        \label{fig: G1}
        \end{subfigure}\hfill
        \begin{subfigure}[b]{0.50\textwidth}
        \centering
            \begin{tikzcd}[ampersand replacement=\&,cramped,column sep=7mm,row sep=6mm]
	{(5,5,1)} \\
	\\
	{(2,2,1)} \&\& {(3,3,1)} \&\& {(4,2,1)} \\
	\\
	\&\& {(1,1,1)}
	\arrow["{{(a_5,a_5,1)}}"{description}, from=1-1, to=3-1]
	\arrow["{{(a_2,a_2,1)}}"{description}, from=3-1, to=5-3]
	\arrow["{{(a_3,a_3,1)}}"{description}, from=3-3, to=5-3]
	\arrow["{{(a_4,a_2,1)}}"{description}, from=3-5, to=5-3]
    \end{tikzcd}
            \caption{$\C G^2$}
            \label{fig:G2}
        \end{subfigure}
        \vfill
        \begin{subfigure}{0.32\textwidth}
        \centering
            \begin{tikzcd}[ampersand replacement=\&,cramped,row sep=6mm,column sep=7mm]
	{(4,2,1)} \&\& {(4,4,-1)}
	\arrow[no head, from=1-3, to=1-1]
\end{tikzcd}
            \caption{$\C G^3$}
            \label{fig: G3}
        \end{subfigure}~
        \begin{subfigure}{0.32\textwidth}
            \[\begin{tikzcd}[ampersand replacement=\&,cramped,row sep=6mm,column sep=7mm]
	{(5,3,1)} \\
	\\
	{(2,1,1)}
	\arrow["{{(a_5,a_3,1)}}"{description}, from=1-1, to=3-1]
\end{tikzcd}\]
        \caption{$\C G^4$}
        \label{fig: G4}
        \end{subfigure}~
        \begin{subfigure}{0.32\textwidth}
            \[\begin{tikzcd}[ampersand replacement=\&,cramped,row sep=6mm,column sep=7mm]
	{(4,1,1 )}
\end{tikzcd}\]
        \caption{$\C G^5$}
        \label{fig: G5}
        \end{subfigure}
        \caption{GGMs from the RTM $M$ (\Cref{exmp: RTM}) to itself (see \Cref{exmp: all GGMs})}
        \label{fig: all GGMs}
    \end{figure}
We end the section with the construction of a quiver morphism between branches of trees given a point in a GGM.
\begin{lem}\label{embedding}
Suppose $\C G$ is a GGM from $M_1$ to $M_2$ and $(n,m)\in \pi(\C G_0)$. Then there exists a quiver morphism $\iota\colon \Br(n)\to \Br(m)$ satisfying $F_2\vert_{\Br(m)}\circ \iota=F_1\vert_{\Br(n)}$.
\end{lem}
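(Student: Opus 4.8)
The plan is to build the quiver morphism $\iota$ recursively by descending along the height function on $\Br(n)$, using the completeness condition at each stage to guarantee that arrows can be lifted. Fix $(n,m)\in\pi(\C G_0)$, so there is a sign $j\in\{-1,1\}$ with $(n,m,j)\in\C G_0$; without loss of generality take $j=1$. I would set $\iota(n)\coloneqq m$ and then, proceeding by induction on $\hei{}(\cdot)$ within $\Br(n)$ (equivalently, reverse induction on the path-distance to the leaves, but since we only use completeness at the incoming arrows, a cleaner organization is: process vertices of $\Br(n)$ in order of \emph{decreasing} height from $n$ — i.e. for each already-processed vertex $n'$ with $(n',m',1)\in\C G_0$ and each arrow $(n''\xrightarrow{a_{n''}}n')$ in $T^1$, invoke \Cref{defn: completeness}(1) to obtain an arrow $(m''\xrightarrow{b_{m''}}m')$ in $T^2$ with $((n'',m'',1)\xrightarrow{(a_{n''},b_{m''},1)}(n',m',1))\in\C G_1$, and set $\iota(n'')\coloneqq m''$, $\iota(a_{n''})\coloneqq b_{m''}$). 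Because $T^1$ is a tree, every vertex of $\Br(n)$ other than $n$ has a unique arrow out of it leading towards $n$, so this procedure visits every vertex and arrow of $\Br(n)$ exactly once and $\iota$ is well-defined as a pair of functions on $\Br(n)_0$ and $\Br(n)_1$.

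Next I would check that $\iota$ is a quiver morphism with image in $\Br(m)$. The source/target compatibility is immediate from the construction: whenever $\iota(a_{n''})=b_{m''}$ we have $s(b_{m''})=m''=\iota(n'')=\iota(s(a_{n''}))$ and $t(b_{m''})=m'=\iota(n')=\iota(t(a_{n''}))$, the latter because $n'=\s p(n'')$ was the vertex at which we applied completeness. To see $\iota(\Br(n))\subseteq\Br(m)$, note that for each processed vertex $n''$ the construction exhibits an arrow from $\iota(n'')$ towards $\iota(\s p(n''))$, so composing these arrows from any $n'$ up to $n$ yields a path in $T^2$ from $\iota(n')$ to $\iota(n)=m$; hence $\iota(n')\in\Br(m)_0$, and the corresponding arrows lie in $\Br(m)_1$. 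Finally, the relation $F_2\circ\iota=F_1$ on $\Br(n)$ holds link-by-link: membership $(n'',m'',1)\in\C G_0\subseteq\C N[2]_0$ forces $F_1(n'')=F_2(m'')$ by \Cref{pullback} (via the projection to $\C N[1]_0$), and membership $(a_{n''},b_{m''},1)\in\C G_1\subseteq\C N[2]_1$ forces $F_1(a_{n''})=F_2(b_{m''})$ likewise; since every vertex and arrow of $\Br(n)$ arises this way (with $n$ itself handled by the base case $(n,m,1)\in\C G_0$), we get $F_2\vert_{\Br(m)}\circ\iota=F_1\vert_{\Br(n)}$.

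The one point that needs care — and the main (mild) obstacle — is the induction bookkeeping: I must argue that the recursion actually reaches \emph{every} vertex of $\Br(n)$, not just those along a single path. This is where \Cref{rem:heightpreserve} and the tree structure of $T^1$ enter: $\Br(n)$ is a rooted tree with sink $n$, every non-root vertex has a unique outgoing arrow that strictly decreases the distance-to-$n$, and completeness~(1) is stated for \emph{all} incoming arrows at a given vertex simultaneously, so once $(n',m',1)\in\C G_0$ is established, \emph{each} child $n''$ of $n'$ in $\Br(n)$ gets processed. An easy induction on $\hei{}(\Br(n))-\hei{\text{(relative)}}$, or simply on the maximum distance from $n$, closes this. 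I do not anticipate needing connectedness, $\C R[2]$-freeness, or involution-freeness of $\C G$ for this lemma — only completeness condition~(1) and the definition of the pullback network — so the proof is essentially a clean unwinding of \Cref{defn: completeness}(1).
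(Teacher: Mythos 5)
Your proposal is correct and follows essentially the same route as the paper: an inductive lift down $\Br(n)$ using completeness condition (1), with $F_2\vert_{\Br(m)}\circ\iota=F_1\vert_{\Br(n)}$ read off from membership of the chosen vertices and arrows in the pullback network. The only divergence is that the paper additionally invokes $\C R[2]$-freeness to show that the witness $m'$ at each step is \emph{unique} (so that $\iota$ is canonically determined), whereas you correctly observe that for the bare existence claim an arbitrary choice of witness suffices, so $\C R[2]$-freeness, connectedness and involution-freeness are indeed not needed.
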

\begin{proof}
Suppose $(n,m,l)\in\C G_0$ for some $l\in\{-1,1\}$. We inductively construct a quiver morphism $\iota:\Br(n)\to\Br(m)$ starting with $\iota(n)\coloneqq m$ such that $(n',\iota(n'),l)\in\C G_0$ for each $n'\in\Br(n)_0$.

If $n'\in\Br(n)_0$ satisfies $n'\neq n$ and $(\s p(n'),\iota(\s p(n')),l)\in\C G_0$, then the completeness of $\C G$ at $(\s p(n'),\iota(\s p(n')),l)$ for the incoming arrow $a_{n'}$ in the domain tree (\Cref{defn: completeness}(1)) guarantees the existence of some $m'\in T^2_0$ such that $(n',m',l)\in\C G_0$ and $(a_{n'},b_{m'},l)\in\C G_1$. In particular, $\s p(m')=\iota(\s p(n'))$ and $F_1(a_{n'})=F_2(b_{m'})$. If there is $m''\neq m'$ satisfying the same properties, then $F_2(b_{m'})=F_2(b_{m''})$ and $\s p(m')=\s p(m'')$. Since $((n',m',l)\xrightarrow{(a_{n'},b_{m'},l)}(\s p(n'),\s p(m'),l)\xleftarrow{(a_{n'},b_{m''},l)}(n',m'',l))\in\C R[2]$ and $\C G$ is $\C R[2]$-free, we conclude that $(n',m'',l)\notin\C G_0$. Thus, we can set $\iota(n')\coloneqq m'$ and $\iota(a_{n'})\coloneqq b_{m'}$. This completes the construction of the quiver morphism $\iota$ satisfying the required property.
\end{proof}
The result above together with \Cref{rem:heightpreserve} yields the following consequence.
\begin{cor}\label{cor:heightpreserve}
Suppose $(n,m)\in \pi(\C G_0)$ for some GGM $\C G$ from $M_1$ to $M_2$. Then $\hei{}(\Br(n))\le\hei{}(\Br(m))$.
\end{cor}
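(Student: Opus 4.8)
The plan is to chain the two ingredients already isolated in the text: \Cref{embedding} manufactures a quiver morphism between the relevant branches, and \Cref{rem:heightpreserve} converts the mere existence of such a morphism into the desired height inequality.

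Concretely, I would start from the hypothesis $(n,m)\in\pi(\C G_0)$, so that $(n,m,l)\in\C G_0$ for some $l\in\{-1,1\}$. Applying \Cref{embedding} to the GGM $\C G$ at this point produces a quiver morphism $\iota\colon\Br(n)\to\Br(m)$ (even with the compatibility $F_2\vert_{\Br(m)}\circ\iota=F_1\vert_{\Br(n)}$, although that refinement is not needed here). Since $n\in T^1_0$ and $m\in T^2_0$, the hypotheses of \Cref{rem:heightpreserve} are met for $\iota$, and it yields $\hei{}(\Br(n))\le\hei{}(\Br(m))$ immediately.

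There is no genuine obstacle: the entire combinatorial content has already been absorbed into \Cref{embedding}, whose proof uses the completeness of $\C G$ at each relevant vertex together with $\C R[2]$-freeness to make the inductive choices of $\iota(n')$ well-defined, and into \Cref{rem:heightpreserve}, which rests on the fact that quiver morphisms preserve lengths of paths (pick $n'\in\Br(n)_0$ admitting a path of length $\hei{}(\Br(n))$ to $n$ in $T^1$; its $\iota$-image is a path of the same length to $m$ inside $\Br(m)$). Thus the proof is simply the composition ``\Cref{embedding} then \Cref{rem:heightpreserve}''.
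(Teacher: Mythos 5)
Your proposal is correct and coincides with the paper's own argument: the corollary is stated there as an immediate consequence of \Cref{embedding} combined with \Cref{rem:heightpreserve}, exactly the composition you describe. No further comment is needed.
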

\begin{exmp}
Consider the RTM $M$ from \Cref{exmp: RTM}, the GGM $\C G^4$ from $M$ to itself as shown in \Cref{fig: G4}, and $(2,1)\in\pi(\C G^4)$. \Cref{embedding} guarantees the existence of a quiver morphism $\iota\colon \Br(2)\to\Br(1)$--this morphism is defined as $\iota(2)=1$ and $\iota(5)=3$ and $\iota(a_5)=a_3$. It is readily verified that $F\vert_{\Br(1)}\circ\iota=F\vert_{\Br(2)}$.
\end{exmp}

\section{Main results for an RTM with a sink}\label{sec 3}
Let $(T,F)$ be a rooted tree (with a sink) over $(Q,\rho)$ and $M\coloneqq M(T,F)$ be the associated RTM. This section is dedicated to the proof of the main result of the paper. Later we also discuss two applications to decompositions of RTMs with sinks.

Here is the main result as stated in \S~\ref{intro} along with an additional equivalent statement.
\begin{restatable*}{thm}{mainone}(\textbf{Sink version}) \label{thm: main A}
The following are equivalent for the RTM $M$:
    \begin{enumerate}
        \item $M$ is indecomposable;
        \item there is no non-identity idempotent quiver morphism $\iota\colon T\to T$ satisfying $F\circ \iota=F$;
        \item there does not exist a GGM $\C M=(\C M_0,\C M_1,\C E_{\C M})$ with $(n_1,n_2)\in\pi(\C M_0)$ satisfying $n_1\neq n_2$, $\s p(n_1)=\s p(n_2)$ and $F(a_{n_1})=F(a_{n_2})$.
    \end{enumerate}
\end{restatable*}

We will prove the implications $(1)\Rightarrow(2)\Rightarrow(3)\Rightarrow(1).$

Thanks to \Cref{prop: indecomposability condn assem}, the next result is the contrapositive of the implication $(1)\Rightarrow(2)$.
\begin{prop}\label{idemtoidem}
Suppose $T$ is a rooted tree with a sink, $\iota\colon T\to T$ is a non-identity idempotent quiver morphism satisfying $F\circ\iota=F$. Then the linear map $\C I\colon M\to M$ defined on the canonical basis of $M$ as $\C I(v_n)\coloneqq v_{\iota(n)}$ for $n\in T_0$ is an idempotent module homomorphism.
\end{prop}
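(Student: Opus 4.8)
By definition of the push-down functor, $M=F_\lambda(V_T)$ is the representation with $M_i=\bigoplus_{n\in F^{-1}(i)}\C K v_n$, whose arrow operators are given, for $\gamma\in Q_1$, by $M_\gamma(v_n)=v_{\s p(n)}$ when $n\neq\ast$ and $F(a_n)=\gamma$, and $M_\gamma(v_n)=0$ otherwise; here we use that in the rooted tree $T$ with sink $\ast$ every vertex $n\neq\ast$ has a unique outgoing arrow $a_n\colon n\to\s p(n)$ while $\ast$ has none. Since $F\circ\iota=F$ forces $v_{\iota(n)}\in M_{F(n)}$, the map $\C I$ preserves the grading and restricts to a linear endomorphism of each $M_i$; so it remains to check that $\C I$ commutes with every $M_\gamma$ and that $\C I^2=\C I$.

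The structural input about $\iota$ that I would isolate first is that it fixes the root and intertwines the parent function. As $\iota$ is a quiver morphism, $\iota(a_n)$ is an arrow with source $\iota(n)$; since $\ast$ emits no arrow, $n\neq\ast$ forces $\iota(n)\neq\ast$, and then necessarily $\iota(a_n)=a_{\iota(n)}$, hence $\iota(\s p(n))=\s p(\iota(n))$. Iterating this identity along the path of length $\hei{}(n)$ joining $n$ to $\ast$ gives $\iota(\ast)=\s p^{\hei{}(n)}(\iota(n))$, which in particular shows $\hei{}(\iota(n))\ge\hei{}(n)$; taking $n$ with $\hei{}(n)=\hei{}(T)\ge\hei{}(\iota(n))$ forces $\hei{}(\iota(n))=\hei{}(n)$ and hence $\iota(\ast)=\ast$.

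With these in hand the verification is a short case split on $n\in T_0$ for a fixed $\gamma\in Q_1$. If $n=\ast$ then $\C I(M_\gamma(v_\ast))=0=M_\gamma(v_\ast)=M_\gamma(\C I(v_\ast))$ because $\iota(\ast)=\ast$. If $n\neq\ast$ then $\iota(n)\neq\ast$ and $F(a_{\iota(n)})=F(\iota(a_n))=F(a_n)$, so $F(a_n)=\gamma$ holds for $n$ precisely when it holds for $\iota(n)$; in that case $\C I(M_\gamma(v_n))=v_{\iota(\s p(n))}=v_{\s p(\iota(n))}=M_\gamma(v_{\iota(n)})=M_\gamma(\C I(v_n))$, and otherwise both sides vanish. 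Thus $\C I\in\R{End}_\Lambda(M)$, and $\C I^2(v_n)=v_{\iota(\iota(n))}=v_{\iota(n)}=\C I(v_n)$ since $\iota^2=\iota$, so $\C I$ is an idempotent module homomorphism.

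The only step that is more than a mechanical unwinding of definitions is the height argument giving $\iota(\ast)=\ast$; I expect that to be the main (mild) obstacle. I would also remark in passing that the hypothesis $\iota\neq 1_T$ is not used in this proposition: it is invoked only afterwards to see that $\C I\neq\B{1}_M$ (as $\iota$ moves some vertex), while $\C I\neq\B{0}$ is automatic since $\C I$ sends basis vectors to basis vectors; with these two observations, \Cref{prop: indecomposability condn assem} gives the decomposability of $M$, which is the contrapositive of the implication $(1)\Rightarrow(2)$.
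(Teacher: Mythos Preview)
Your proof is correct and follows essentially the same approach as the paper's: verify that $\C I$ commutes with each arrow operator via the case split $n=\ast$ versus $n\neq\ast$, using $\iota(\ast)=\ast$, $\iota(a_n)=a_{\iota(n)}$, and $F\circ\iota=F$, then read off idempotency from $\iota^2=\iota$. The only cosmetic differences are that the paper obtains $\iota(\ast)=\ast$ by citing \Cref{rem:heightpreserve} whereas you spell out the height argument explicitly, and your observation that the non-identity hypothesis is inessential to the statement as written is accurate (the paper invokes it only in its final sentence to conclude that $\C I$ is a \emph{non-trivial} idempotent).
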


\begin{proof}
To verify that $\C I$ is a module homomorphism, it is enough to check that $$\C I(\alpha\cdot v_n)=\alpha\cdot v_{\iota(n)} \quad\text{ for }n\in T_0\text{ and }\alpha\in Q_1.$$

Since $\iota$ is a quiver morphism, \Cref{rem:heightpreserve} yields $\iota(*)=*$, and hence we have $\alpha\cdot v_*=\alpha\cdot v_{\iota(*)}=0$ for all $\alpha\in Q_1$. Thus we may assume that $n\in T_0\setminus\{*\}$. Then note that $\alpha\cdot v_n\neq0$ if and only if $\alpha=F(a_n)=F(\iota(a_n))=F(a_{\iota(n)})$ if and only if $\alpha\cdot v_{\iota(n)}\neq0$. Thus, it remains to show that $$\C I(F(a_n)\cdot v_n)=F(a_n)\cdot \C I(v_{\iota(n)}) \quad\text{ for } n\in T_0\setminus\{*\}.$$

Since $F(a_n)\cdot v_n=v_{\s p(n)}$, the left hand side equals $v_{\iota(\s p(n))}$ while the right hand side is $F(a_n)\cdot v_{\iota(n)}=v_{\s p(\iota(n))}$. Since $\iota$ is a quiver morphism, we have $\s p(\iota(n))=\iota(\s p(n))$ as required.

Finally, since $\iota$ is a non-identity idempotent, we see that $\C I$ is a non-trivial idempotent.
\end{proof}

Now we prove the contrapositive of $(2)\Rightarrow(3)$. Suppose $\C G=(\C G_0,\C G_1,\C E_{\C G})$ is a GGM with $(n_1,n_2,l)\in\C G_0$ for some $l\in\{-1,1\}$ satisfying $n_1\neq n_2$, $\s p(n_1)=\s p(n_2)$ and $F(a_{n_1})=F(a_{n_2})$. Since $(n_1,n_2,l)\in\C G_0$, \Cref{embedding} yields a quiver morphism $\iota':\Br(n_1)\to\Br(n_2)$. Since $\s p(n_1)=\s p(n_2)$, it is easily verified that the extension $\iota$ of $\iota'$ by the identity map on $T_0\setminus\Br(n_1)_0$ together with the assignment $a_n\mapsto a_{\iota(n)}$ for $n\in T_0\setminus\{\ast\}$ is a quiver endomorphism of $T$. This endomorphism is idempotent since $\iota(\Br(n_1)_0)\subseteq\Br(n_2)_0$ and the restriction of $\iota$ to $T_0\setminus\Br(n_1)_0$ is identity. Finally, $\iota$ is non-identity follows from $\iota(n_1)=n_2\neq n_1$.

The key step in the proof of the implication $(3)\Rightarrow(1)$ is the following theorem.
\begin{thm}\cite[Theorem~B]{sengupta2025generalised}\label{SK:thm B}
Suppose $M'\coloneqq M(T',F')$ is a generalized tree modules such that the pair $(M',M')$ is ghost-free and the following hypotheses hold:
\begin{enumerate}
    \item[(a)] there is no GGM $\C G$ from $M'$ to itself such that $(n_1,n_2)\in \pi(\C G_0)$ for some $n_1\neq n_2$ satisfying $F(a_{n_1})=F(a_{n_2})$ and $\s p(n_1)=\s p(n_2)$; and
    \item[(b)] if $(n_1,n_2)\in\pi(\C G^1_0)$ for some GGM $\C G^1$ from $M'$ to itself and $n_1\neq n_2$, then $(n_2,n_1)\notin\pi(\C G^2_0)$ for any GGM $\C G^2$ from $M'$ to itself.
\end{enumerate}
Then $M'$ is indecomposable.
\end{thm}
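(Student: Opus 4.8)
The plan is to deduce that $M'$ is indecomposable from \Cref{prop: indecomposability condn assem} by showing the endomorphism algebra $R\coloneqq\R{End}_\Lambda(M')$ is local, since a local ring has no idempotents besides $\B 0$ and $\B 1$. By ghost-freeness of $(M',M')$ together with \cite[Theorem~A]{sengupta2025generalised}, $R$ is spanned over $\C K$ by the finitely many maps $\C H_{\C G}$, with $\C G$ ranging over GGMs from $M'$ to itself. Write $\theta_{mn}$ for the coefficient of $v_m$ in $\theta(v_n)$, and declare $n\preceq m$ when $n=m$ or $(n,m)\in\pi(\C G_0)$ for some GGM $\C G$; since $\C H_{\C G}(v_n)=\sum_{(n,m,l)\in\C G_0}l\,v_m$, every $\theta\in R$ satisfies $\theta_{mn}\neq 0\Rightarrow n\preceq m$.

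The essential use of hypothesis~(a) is the claim that \emph{a GGM $\C G$ with $(n,n,l)\in\C G_0$ must have $(n',n',l)\in\C G_0$ for every vertex $n'$}, which I would prove by propagating the diagonal vertex along arrows using \Cref{defn: completeness}. At a diagonal vertex $(p,p,l)$, completeness condition~(1) for an incoming arrow $a_{n'}$ forces a vertex $m'$ with $\s p(m')=p=\s p(n')$, $F(a_{n'})=F(a_{m'})$ and $(n',m',l)\in\C G_0$; if $m'\neq n'$ this is exactly the configuration forbidden by~(a), so $m'=n'$ and $(n',n',l)\in\C G_0$, pushing the diagonal down the branch of $p$. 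Dually, condition~(2) at $(p,p,l)$ for the outgoing arrow of $p$ has a second alternative which, requiring an edge to some vertex $(p,m'',-l)$, again produces a configuration forbidden by~(a); so its first alternative holds and $(\s p(p),\s p(p),l)\in\C G_0$, pushing the diagonal up to the sink and thence down to all of $T'_0$. Consequently every $\C H_{\C G}$, and by linearity every $\theta\in R$, has all its diagonal entries $\theta_{nn}$ equal; write $\delta(\theta)\in\C K$ for this common value. The map $\delta\colon R\to\C K$ is $\C K$-linear with $\delta(\B 1_{M'})=1$, and it is an algebra homomorphism because for $m\neq n$ one cannot have $\theta_{nm}\neq 0$ and $\psi_{mn}\neq 0$ simultaneously — that would give $m\preceq n$ and $n\preceq m$, contradicting~(b) — so $(\theta\psi)_{nn}=\theta_{nn}\psi_{nn}$. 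Hence $R=\C K\cdot\B 1_{M'}\oplus\ker\delta$ as $\C K$-spaces, with $\ker\delta$ a two-sided ideal.

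It now suffices to show $\ker\delta$ is nilpotent, for then $R$ is a field extended by a nilpotent ideal, hence local. I would prove this by showing $\preceq$ has no cycle through distinct vertices; fixing a linear extension of $\preceq$ then makes every $\theta\in R$ upper triangular and every $\theta\in\ker\delta$ strictly upper triangular. A $2$-cycle is impossible by~(b). For a cycle $n_0\preceq n_1\preceq\cdots\preceq n_r=n_0$ of length $r\geq 3$, compose the $F$-compatible quiver morphisms $\Br(n_i)\to\Br(n_{i+1})$ supplied by \Cref{embedding} into an $F$-compatible self-map $\psi$ of $\Br(n_0)$ fixing $n_0$. Unless $\psi$ is the identity map of $\Br(n_0)$, one extracts from it a non-identity idempotent endomorphism of $\Br(n_0)$ — an idempotent power of $\psi$ if $\psi$ is not bijective, or, if $\psi$ is a non-identity automorphism, the map agreeing with $\psi$ on the branch of a least-height non-fixed vertex and equal to the identity elsewhere; extending this by the identity off $\Br(n_0)$ yields a non-identity idempotent quiver morphism of $T'$, which in turn produces, via \Cref{idemtoidem} and a GGM-expansion of the associated endomorphism, a GGM containing a sibling identification, contradicting~(a). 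The remaining case $\psi=\mathrm{id}_{\Br(n_0)}$ forces each $\Br(n_i)\to\Br(n_{i+1})$ to be an isomorphism, and is closed using \Cref{cor:heightpreserve} to force all $\hei{}(\Br(n_i))$ equal together with a comparison of these branches inside $T'$.

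The step I expect to be the main obstacle is this last one: establishing that $R$ is genuinely triangular, i.e., that $\preceq$ admits no non-trivial cycles. The $2$-cycle case is handed to us by~(b), but excluding longer cycles — and in particular the delicate subcase where the composed branch morphism $\psi$ is an isomorphism rather than yielding a proper idempotent — is where \Cref{embedding}, \Cref{cor:heightpreserve}, and \Cref{idemtoidem} all have to be brought together with care. By contrast, the diagonal-propagation claim (the sole place hypothesis~(a) enters) and the verification that $\delta$ is an algebra homomorphism are short once \Cref{defn: completeness} has been unwound and the antisymmetry furnished by~(b) is in hand.
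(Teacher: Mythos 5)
First, a framing point: the paper does not prove this statement at all --- it is imported verbatim as \cite[Theorem~B]{sengupta2025generalised} and used as a black box, so there is no in-paper proof to compare against. Your proposal is therefore an attempt to reprove an external input. The skeleton is reasonable and parts of it are genuinely correct: ghost-freeness plus \cite[Theorem~A]{sengupta2025generalised} does give that $\R{End}_\Lambda(M')$ is spanned by the maps $\C H_{\C G}$; your diagonal-propagation argument from hypothesis~(a) (pushing $(p,p,l)$ along incoming and outgoing arrows via \Cref{defn: completeness}, using involution-freeness to keep the sign) is correct in the rooted sink case and does show every $\C H_{\C G}$ has constant diagonal; and hypothesis~(b) does make $\delta$ multiplicative. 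But note that the theorem is stated for \emph{generalized} tree modules, whereas everything you lean on (parents, branches, \Cref{embedding}, \Cref{cor:heightpreserve}) is specific to rooted trees with a sink, so at best you are proving the special case the paper actually invokes.

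The genuine gap is exactly where you flagged it, and it is not closed. Nilpotency of $\ker\delta$ needs the relation $\preceq$ to be acyclic; (b) kills $2$-cycles only, and your treatment of an $r$-cycle ($r\ge 3$) reduces to the case where the composite $\psi\colon\Br(n_0)\to\Br(n_0)$ is the identity, which you then wave away with ``a comparison of these branches inside $T'$.'' That case is precisely the one where the composite carries no information --- e.g.\ if all the $\Br(n_i)$ are single vertices with the same $F$-image, $\psi=\mathrm{id}$ automatically --- so the contradiction must come from the completeness structure of the GGMs themselves (an $r$-fold version of the walk-up-the-parents argument in the paper's \Cref{extra condition}), not from the branch morphisms. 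Two further repairs are needed in the cases you do treat. First, ``via \Cref{idemtoidem} and a GGM-expansion of the associated endomorphism, a GGM containing a sibling identification'' is not a valid step: writing the idempotent $\C I$ as a $\C K$-linear combination of $\C H_{\C G}$'s does not hand you a single GGM whose vertex set contains a sibling pair; you must construct that GGM explicitly from the idempotent (take $\C G_0=\{(x,x,1)\mid x\notin\Br(n)_0\}\cup\{(x,\iota(x),1)\mid x\in\Br(n)_0\}$ for a minimal-height non-fixed $n$, as in Case~1 of \Cref{extra condition}). Second, beware of circularity: the implication ``non-identity idempotent $\Rightarrow$ hypothesis~(a) fails'' is the contrapositive of $(3)\Rightarrow(2)$ of \Cref{thm: main A}, which the paper only obtains by routing through $(1)$, i.e.\ through the very theorem you are proving; it must be established directly as above.
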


In view of \Cref{thm: ghostfree} and the above theorem, it suffices to prove the next technical result to complete the proof of $(3)\Rightarrow(1)$.

\begin{lem}\label{extra condition}
For the RTM $M$ defined at the beginning of this section, hypothesis $(a)$ of the above theorem implies its hypothesis $(b)$. 
\end{lem}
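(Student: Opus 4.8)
The plan is to argue the contrapositive: assuming hypothesis $(b)$ fails, I will produce a GGM witnessing the failure of hypothesis $(a)$. So suppose there are GGMs $\C G^1$ and $\C G^2$ from $M$ to itself and vertices $n_1\neq n_2$ with $(n_1,n_2)\in\pi(\C G^1_0)$ and $(n_2,n_1)\in\pi(\C G^2_0)$. The first step is to invoke \Cref{embedding} twice: from $(n_1,n_2)\in\pi(\C G^1_0)$ I get a quiver morphism $\iota_1\colon\Br(n_1)\to\Br(n_2)$ with $F|_{\Br(n_2)}\circ\iota_1=F|_{\Br(n_1)}$, and from $(n_2,n_1)\in\pi(\C G^2_0)$ I get $\iota_2\colon\Br(n_2)\to\Br(n_1)$ with $F|_{\Br(n_1)}\circ\iota_2=F|_{\Br(n_2)}$.

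Next I would use \Cref{cor:heightpreserve} (or directly \Cref{rem:heightpreserve}) on both morphisms to conclude $\hei{}(\Br(n_1))\le\hei{}(\Br(n_2))$ and $\hei{}(\Br(n_2))\le\hei{}(\Br(n_1))$, so the two branches have equal height. Now consider the composite $\iota_2\circ\iota_1\colon\Br(n_1)\to\Br(n_1)$, an endomorphism of the rooted tree $\Br(n_1)$ fixing its root and commuting with $F$. The key combinatorial observation is that a height-preserving quiver endomorphism of a finite rooted tree that fixes the root and does not decrease the number of vertices in any branch must be an automorphism — more precisely, since $\iota_1$ and $\iota_2$ are both injective on vertices at each height level (this follows from the construction in \Cref{embedding}, where the child chosen is unique by $\C R[2]$-freeness), the composite $\iota_2\circ\iota_1$ is injective, hence (finiteness) bijective, hence an automorphism of $\Br(n_1)$; symmetrically $\iota_1\circ\iota_2$ is an automorphism of $\Br(n_2)$. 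Consequently $\iota_1$ is itself a quiver isomorphism $\Br(n_1)\xrightarrow{\sim}\Br(n_2)$ intertwining the $F$-labels.

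With this isomorphism in hand, I build the offending GGM. Since $\s p(n_1)$ and $\s p(n_2)$ are the parents — note $n_1\neq n_2$ forces both to be non-roots, as the root is unique, and we may assume WLOG by symmetry that $\hei{}(n_1)\ge\hei{}(n_2)$; actually the cleaner route is: form the subnetwork $\C M$ of $\C N[2]$ on vertices $\{(n',\iota_1(n'),l)\mid n'\in\Br(n_1)_0\}$ together with the arrows $(a_{n'},b_{\iota_1(n')},l)$, which is complete (as $\iota_1$ is a graph isomorphism, every incoming arrow at $\iota_1(n')$ in $\Br(n_2)$ is hit), connected, $\C R[2]$-free and involution-free, hence a GGM from $M$ to itself; it contains $(n_1,n_2,l)$ with $n_1\neq n_2$. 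The only remaining point is to verify $\s p(n_1)=\s p(n_2)$ and $F(a_{n_1})=F(a_{n_2})$ so that hypothesis $(a)$ genuinely fails — here I would argue that $F(a_{n_1})=F(a_{n_2})$ is automatic from $(n_1,n_2,l)\in\C G^1_0$ being on an arrow or from the label equality, and that $\s p(n_1)=\s p(n_2)$ must hold because otherwise, applying \Cref{embedding} again at the parents and iterating $\iota$ up the tree toward the root would force a strictly ascending chain in the pullback quiver $\C P$ that cannot close up, contradicting that both $(n_1,n_2)$ and $(n_2,n_1)$ lie in pullback networks with the root fixed; I expect the precise bookkeeping here — propagating the branch isomorphism upward and ruling out $\s p(n_1)\neq\s p(n_2)$ using the tree structure of $\C P$ (\Cref{forest}, \Cref{isasink}) — to be the main obstacle, and the heart of the argument will be showing that a two-way pair of $F$-preserving branch morphisms between $\Br(n_1)$ and $\Br(n_2)$ at different heights is impossible, so that $n_1$ and $n_2$ share a parent.
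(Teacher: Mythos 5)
Your opening moves (contrapositive, two applications of \Cref{embedding}, equal heights via \Cref{cor:heightpreserve}) coincide with the paper's, but the argument then breaks at two concrete points. First, the claim that $\iota_1$ and $\iota_2$ are injective, hence that $\iota_2\circ\iota_1$ is an automorphism of $\Br(n_1)$ and $\iota_1$ an isomorphism, is unjustified: the $\C R[2]$-freeness used in \Cref{embedding} forces the image $m'$ of a \emph{given} child $n'$ to be unique (the forbidden triangle consists of two arrows whose sources share the same \emph{first} coordinate, joined by an edge), but two distinct children $n'\neq n''$ mapping to the same $m'$ give sources $(n',m',l)$ and $(n'',m',l)$ with different first coordinates, between which $\C E^2$ has no edge, so no triangle arises and nothing prevents the collapse. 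Branch morphisms produced by \Cref{embedding} can genuinely identify siblings, so the "automorphism" step fails. Second, the subnetwork on $\{(n',\iota_1(n'),l)\mid n'\in\Br(n_1)_0\}$ is not complete: at $(n_1,n_2,l)$ the outgoing codomain arrow $a_{n_2}$ (which exists since $n_2\neq\ast$) must be witnessed either by an arrow into $(\s p(n_1),\s p(n_2),l)$, which is not in your vertex set, or by an edge, which is impossible since all your vertices carry the same sign $l$ while edges in $\C E^2$ join opposite signs. So your candidate network is not a GGM.

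The deeper problem is that the target you set yourself, namely $\s p(n_1)=\s p(n_2)$, is not what actually holds: the failure of hypothesis $(a)$ need not be witnessed at the pair $(n_1,n_2)$ itself. The paper instead analyses \emph{how} the completeness of $\C G^1$ at $(n_1,n_2,1)$ for the codomain arrow $a_{n_2}$ is witnessed. If by an edge $\{(n_1,n_2,1),(n_1,n_3,-1)\}$, then $n_2$ and $n_3$ are siblings with $F(a_{n_2})=F(a_{n_3})$, and one assembles a new GGM from the diagonal $\{(x,x,1)\mid x\in\Br(n_2)_0\}$, the graph $\{(x,\iota(x),-1)\}$ of a composite branch morphism $\iota\colon\Br(n_2)\to\Br(n_3)$, and the single edge $\{(n_2,n_2,1),(n_2,n_3,-1)\}$ -- the diagonal copy is exactly what your network lacks to restore completeness at the top. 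If by arrows on both sides, then $(\s p(n_1),\s p(n_2),1)\in\C G^1_0$ and $(\s p(n_2),\s p(n_1),1)\in\C G^2_0$, and one iterates up the finite tree until the parents coincide or the edge case occurs. You correctly flag this bookkeeping as "the main obstacle", but it is the entire content of the lemma, and the route through an alleged branch isomorphism does not reach it.
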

\begin{proof}
We prove the contrapositive. Since $(b)$ fails, there are $n_1\neq n_2$ in $T_0$ and (possibly equal) GGMs $\C G^1$ and $\C G^2$ such that $(n_1,n_2)\in \pi(\C G^1_0)$ and $(n_2,n_1)\in \pi(\C G^2_0)$. Then \Cref{cor:heightpreserve} yields $\hei{}(\Br(n_1))=\hei{}(\Br(n_2))$. Since $n_1\neq n_2$, neither $n_j$ belongs to the branch of the other. In particular, $*\notin\{n_1,n_2\}$. Replacing $\C G^j$ by $-\C G^j$ if necessary, we may assume that $(n_1,n_2,1)\in\C G^1_0$ and $(n_2,n_1,1)\in\C G^2_0$.

There are two cases depending on whether the completeness of $\C G^1$ (resp. $\C G^2$) at $(n_1,n_2,1)$ (resp. $(n_2,n_1,1)$) for the arrow $a_{n_2}$ (resp. $a_{n_1}$) in the codomain is witnessed by an arrow or an edge.

\noindent \textbf{Case 1:} Suppose the completeness of $\C G^1$ at $(n_1,n_2,1)$ for the arrow $a_{n_2}$ in the codomain is witnessed by an edge, say $\{(n_1,n_2,1),(n_1,n_3,-1)\}\in\C E_{\C G^1})$. Then $\s p(n_2)=\s p(n_3)$ and $F(a_{n_2})=F(a_{n_3})$.

If $n_3=n_1$ then $\C G^1$ witnesses the failure of hypothesis $(a)$.

On the other hand, if $n_1\neq n_3$, then \Cref{embedding} applied to $(n_1,n_3)\in \pi(\C G^1_0)$ (resp. $(n_2,n_1)\in \pi(\C G^2_0)$) yields a quiver morphism $\iota_1:\Br(n_1)\to\Br(n_3)$ (resp. $\iota_2:\Br(n_2)\to\Br(n_1)$) satisfying $F\vert_{\Br(n_3)}\circ \iota_1=F\vert_{\Br(n_1)}$ (resp. $F\vert_{\Br(n_1)}\circ \iota_1=F\vert_{\Br(n_2)}$). Consider the quiver morphism $\iota\coloneqq\iota_1\circ \iota_2\colon\Br(n_2)\to\Br(n_3)$. Then $F\vert_{\Br(n_3)}\circ \iota=F\vert_{\Br(n_2)}$. Now consider the subnetwork $\C G=(\C G_0,\C G_1,\C E_{\C G})$ of $\C N[2]$ defined as follows: 
\begin{align*}
\C G_l&\coloneqq\{(x,x,1)\mid x\in \Br(n_2)_l\}\cup\{(x,\iota(x),-1)\mid x\in\Br(n_2)_l\}\mbox{ for }l\in{0,1};\text{ and}\\ 
\C E_{\C G}&\coloneqq \{\{(n_2,n_2,1),(n_2,n_3,-1)\}\}.
\end{align*}
The subnetwork $\C G$ is clearly $\C R[2]$-free, involution free and connected. The completeness condition at $(n,n,1)\in\C G_0$ (resp. $(n,\iota(n),-1)\in\C G_0$) for an arrow $n'\xrightarrow{a}n$ in the domain is satisfied by $(n',n',1)$ (resp. $(n',\iota(n'),-1)$). Similarly, if $n\neq n_2$, then the completeness condition at $(n,n,1)\in\C G_0$ (resp. $(n,\iota(n),-1)\in\C G_0$) for the outgoing arrow $a_n$ in the codomain is satisfied by $(\s p(n),\s p(n),1)$ (resp. $(\s p(n),\s p(\iota(n)),-1)$). Finally, the completeness at $(n_2,n_2,1)$ (resp. $(n_2,n_3,-1)$) for the arrow $a_{n_2}$ (resp. $a_{n_3}$) in the codomain is satisfied by the unique edge in $\C E_{\C G}$. This completes the proof that $\C G$ is complete. Thus $\C G$ is a GGM witnessing the failure of hypothesis $(a)$ at $(n_2,n_3)\in\pi(G_0)$.

A similar proof works if the completeness of $\C G^2$ at $(n_2,n_1,1)$ for the arrow $a_{n_1}$ in the codomain is witnessed by an edge.

\noindent \textbf{Case 2:} Suppose the completeness of $\C G^1$ at $(n_1,n_2,1)$ for the arrow $a_{n_2}$ in the codomain is witnessed by the arrow $(a_{n_1},a_{n_2},1)$, and the completeness of $\C G^2$ at $(n_2,n_1,1)$ for the arrow $a_{n_1}$ in the codomain is witnessed by the arrow $(a_{n_2},a_{n_1},1)$. Then $(\s p(n_1),\s p(n_2),1)\in\C G^1_0$ and $(\s p(n_2),\s p(n_1),1)\in\C G^2_0$.

If $\s p(n_1)=\s p(n_2)$, then $\C G^1$ witnesses the failure of hypothesis $(a)$ at $(n_1,n_2)\in\pi(\C G^1_0)$.

On the other hand, if $\s p(n_1)\neq\s p(n_2)$, then neither $\s p(n_j)$ belongs to the branch of the other and we can repeat the entire argument for the pairs $(\s p(n_1),\s p(n_2),1)\in\C G^1_0$ and $(\s p(n_2),\s p(n_1),1)\in\C G^2_0$. Since $T$ is finite, the process will terminate by witnessing a failure of hypothesis $(a)$.
\end{proof}

As argued earlier, the proof of the above lemma also completes the proof of \Cref{thm: main A}.

As an application of \Cref{thm: main A}, we provide a way to recursively construct indecomposable RTMs (each with a sink) over a bound quiver algebra.
\begin{cor}\label{cor: 2}
Suppose $T$ is a rooted tree with a sink. Suppose $n_1,\cdots,n_k$ is the list of all the vertices of $T$ such that $\s p(n_j)=\ast$, and that $M_j\coloneqq M(\Br(n_j),F\vert_{\Br(n_j)})$ is indecomposable for each $j$. Then the following are equivalent.
\begin{itemize}
\item[(a)] The module $M$ is decomposable.
\item[(b)] There are distinct $i,j\in\{1,\cdots,k\}$ satisfying $F(a_{n_{i}})=F(a_{n_{j}})$, and a quiver morphism $\iota:\Br(n_i)\to \Br(n_j)$ satisfying $\iota(n_i)=n_j$ and $F\vert_{\Br(n_j)}\circ\iota=F\vert_{\Br(n_i)}$.
\end{itemize}
\end{cor}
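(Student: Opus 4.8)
The plan is to reduce the decomposability of $M$ directly to Theorem~\ref{thm: main A}, using that the summands at the children of the root are already indecomposable. By Theorem~\ref{thm: main A}, $M$ is decomposable if and only if there is a non-identity idempotent quiver morphism $\iota\colon T\to T$ with $F\circ\iota=F$. Since such an $\iota$ fixes the root (as $\iota(\ast)=\ast$ by \Cref{rem:heightpreserve}), it permutes the set $\{n_1,\dots,n_k\}$ of children of $\ast$ and restricts to a quiver morphism on each branch $\Br(n_j)$ sending it into the branch of some child $n_{j'}$. So the first step is to establish this structural description of idempotent morphisms fixing the root: $\iota$ is determined by where it sends each child and by its restriction to each $\Br(n_j)$, and idempotency forces $\iota$ to act idempotently ``up to the permutation of children.''

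For the direction (b)$\Rightarrow$(a): given distinct $i,j$ with $F(a_{n_i})=F(a_{n_j})$ and a quiver morphism $\iota_0\colon\Br(n_i)\to\Br(n_j)$ with $\iota_0(n_i)=n_j$ and $F|_{\Br(n_j)}\circ\iota_0=F|_{\Br(n_i)}$, I would extend $\iota_0$ to all of $T$ by the identity outside $\Br(n_i)_0$ (and the obvious assignment on arrows, $a_n\mapsto a_{\iota(n)}$). This is exactly the extension construction already used in the proof of $(2)\Rightarrow(3)$ of Theorem~\ref{thm: main A}: the condition $F(a_{n_i})=F(a_{n_j})$ together with $\s p(n_i)=\s p(n_j)=\ast$ makes the arrow assignment at $n_i$ compatible, so $\iota$ is a quiver endomorphism of $T$; it is idempotent because $\iota(\Br(n_i)_0)\subseteq\Br(n_j)_0$ and $\iota$ is the identity on $T_0\setminus\Br(n_i)_0$; and it is non-identity since $\iota(n_i)=n_j\neq n_i$. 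Theorem~\ref{thm: main A} then gives that $M$ is decomposable.

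For the direction (a)$\Rightarrow$(b): assuming $M$ is decomposable, Theorem~\ref{thm: main A} furnishes a non-identity idempotent $\iota\colon T\to T$ with $F\circ\iota=F$. Using the structural description from the first step, I want to find a child $n_i$ that is not fixed by $\iota$ in the ``right'' way. First, if the permutation $\iota$ induces on $\{n_1,\dots,n_k\}$ is non-trivial, pick $n_i$ with $n_j:=\iota(n_i)\neq n_i$; then $\iota|_{\Br(n_i)}\colon\Br(n_i)\to\Br(n_j)$ together with $F\circ\iota=F$ and $F(a_{n_i})=F(\iota(a_{n_i}))=F(a_{n_j})$ gives exactly (b). Second, if $\iota$ fixes every child setwise but is still non-identity, then $\iota$ restricts to a non-identity idempotent quiver morphism $\iota|_{\Br(n_i)}\colon\Br(n_i)\to\Br(n_i)$ with $F|_{\Br(n_i)}\circ(\iota|_{\Br(n_i)})=F|_{\Br(n_i)}$ for some $i$; but then $M_i=M(\Br(n_i),F|_{\Br(n_i)})$ would be decomposable by Theorem~\ref{thm: main A} applied to the rooted tree $(\Br(n_i),n_i)$, contradicting the hypothesis. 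Hence the first case must occur, yielding (b).

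The main obstacle I anticipate is the first step: carefully justifying that an idempotent $\iota$ fixing the root really does decompose into a permutation of the children plus branch-wise data, and in particular that in the ``permutation non-trivial'' case the relevant restriction $\iota|_{\Br(n_i)}\to\Br(n_j)$ satisfies $\iota(n_i)=n_j$ rather than landing deeper in $\Br(n_j)$ — this uses that $\hei{}(n_i)=\hei{}(n_j)=1$ and that quiver morphisms preserve heights (\Cref{rem:heightpreserve}), so $\iota(n_i)$ has height $1$, i.e.\ is itself a child of $\ast$, and being in $\Br(n_j)$ forces $\iota(n_i)=n_j$. One should also double-check that idempotency is consistent with a non-trivial permutation: since $\iota^2=\iota$, the induced permutation is an idempotent permutation, hence the identity on its image — so actually $\iota$ collapses some children onto others and fixes the targets, which is precisely the shape needed for (b). Everything else is the routine verification that the various extensions and restrictions are genuine (bound) quiver morphisms, which parallels arguments already carried out in \Cref{idemtoidem} and in the proof of $(2)\Rightarrow(3)$.
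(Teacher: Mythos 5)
Your proposal is correct and takes essentially the same route as the paper: both directions reduce to \Cref{thm: main A}, with (b)$\Rightarrow$(a) given by extending $\iota$ by the identity outside $\Br(n_i)$, and (a)$\Rightarrow$(b) by restricting a non-identity idempotent to a branch, using the indecomposability of the $M_j$ (again via \Cref{thm: main A}) to exclude the case where $\iota$ sends each $n_i$ to itself. The only cosmetic difference is that the paper starts from an arbitrary non-fixed vertex $m$ and locates the branches of $m$ and $\iota(m)$, whereas you case-split on the induced (idempotent, not necessarily bijective) self-map of the set of children of the root; your closing self-correction that this map collapses children rather than permuting them is exactly right.
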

\begin{proof}
$((a)\Rightarrow(b))$ Since $M$ is decomposable, \Cref{thm: main A} yields a non-identity idempotent quiver endomorphism $\iota\colon T\to T$ satisfying $F\circ\iota=F$. Thanks to \Cref{rem:heightpreserve}, we have $\iota(\ast)=\ast$. Since $\iota$ is non-identity, there is $m\neq*$ in $T_0$ such that $\iota(m)\neq m$. Let $i,j\in\{1,\cdots,k\}$ be such that $m\in\Br(n_i)$ and $\iota(m)\in\Br(n_j)$. If $i=j$, then $\iota(n_i)=n_i$ and hence $\iota\vert_{\Br(n_i)}:Br(n_i)\to\Br(n_i)$ is a non-identity idempotent quiver endomorphism. Thanks to \Cref{thm: main A}, this contradicts the indecomposability of $M_i$. Hence, $i\neq j$.

Since $\iota(*)=*$, we have $\iota(n_i)=n_j$ and hence $\iota\vert_{\Br(n_i)}:\Br(n_i)\to\Br(n_j)$ is a quiver morphism. Finally, $F\circ\iota=F$ together with $\iota(n_i)=n_j$ implies $F(a_{n_i})=F(\iota(a_{n_i}))=F(a_{\iota(n_i)})=F(a_{n_j})$. Therefore, $(b)$ holds.

$((b)\Rightarrow(a))$ Suppose distinct $i,j\in\{1,\cdots,k\}$ satisfy $F(a_{n_{i}})=F(a_{n_{j}})$, and that there is a quiver morphism $\iota:\Br(n_i)\to \Br(n_j)$ satisfying $\iota(n_i)=n_j$ and $F\vert_{\Br(n_j)}\circ\iota=F\vert_{\Br(n_i)}$. Extend $\iota$ to an endomorphism $\iota'\colon T\to T$ by the identity map on $\ang{T_0\setminus\Br(n_i)_0}$ and $a_{n_i}\mapsto a_{n_j}$. It is readily verified that $\iota'$ is a non-identity idempotent quiver morphism satisfying $F\circ\iota'=F$, and thus $(a)$ follows from \Cref{thm: main A}.
\end{proof}
A special case of this corollary, when $Q$ is a rooted tree with a sink and $\rho=\emptyset$, was proven by Katter and Mahrt in \cite[Lemma~2]{katter_mahrt}.

\newcommand{\fix}[1][\iota]{\mathrm{Fix}(#1)}
Towards the end of this section, we show as an application \Cref{idemtoidem} how a non-identity idempotent quiver endomorphism $\iota\colon T\to T$ satisfying $F\circ\iota=F$ provides a decomposition of $M$. Set $\fix\coloneqq\{n\in T_0\mid \iota(n)=n\}$. Clearly $\ang{\fix}$ is a subtree of $T$ with sink $*$ thanks to \Cref{rem:heightpreserve}. However, $\ang{T_0\setminus\fix}$ need not be connected. Let $T^1,\cdots,T^k$ be the partition of $T_0\setminus\fix$ such that each $\ang{T^j}$ is connected component of $\ang{T_0\setminus\fix}$. The trees $\ang{T^1},\cdots,\ang{T^k}$ are all rooted trees being subtrees of the rooted tree $T$, say with roots $n_1,\cdots,n_k$ respectively. Let $M_j\coloneqq M(\ang{T^j},F\vert_{\ang{T^j}}))$ for $1\leq j\leq k$ and $M_0\coloneqq M(\ang{\fix},F\vert_{\ang{\fix}})$.
\begin{lem}\label{cor: 1}
If $\iota\colon T\to T$ is a non-identity idempotent quiver morphism satisfying $F\circ \iota=F$, then using the notations in the above paragraph, we have $M\cong\bigoplus_{j=0}^k M_j$.
\end{lem}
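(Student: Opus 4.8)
The plan is to exhibit the isomorphism $M \cong \bigoplus_{j=0}^k M_j$ directly on the level of the canonical bases. Note that as a $\C K$-vector space, $M$ has basis $\{v_n\}_{n\in T_0}$, and $T_0$ is the disjoint union $\fix \sqcup T^1 \sqcup \cdots \sqcup T^k$; meanwhile $\bigoplus_{j=0}^k M_j$ has basis indexed by the same set. So the obvious $\C K$-linear bijection $\Phi$ sending $v_n\in M$ to the corresponding basis vector of $M_j$ (where $n$ lies in the $j$-th piece, with the convention that $T^0 := \fix$) is a vector-space isomorphism with the correct dimension. The entire content is to show that $\Phi$ and its inverse respect the $\Lambda$-action, i.e.\ that the decomposition $T_0 = \fix \sqcup T^1\sqcup\cdots\sqcup T^k$ is ``$F$-compatible'' in the sense that for $n\in T_0\setminus\{*\}$, the vertices $n$ and $\s p(n)$ lie in the same piece whenever $F(a_n)$ acts nontrivially — which here is automatic since $F(a_n)\cdot v_n = v_{\s p(n)}$ always. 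Thus the only thing to check is: \emph{$n$ and $\s p(n)$ lie in the same piece of the partition, unless one of them is the root $*$ and the other lies outside $\fix$.} Actually one must be slightly careful at the boundary between $\ang{\fix}$ and the $\ang{T^j}$'s, since $\s p(n_j)\in\fix$ while $n_j\notin\fix$, so the arrow $a_{n_j}$ straddles two pieces; I address this below.

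The key steps, in order, are as follows. First, fix the convention $T^0 = \fix$, $M_0 = M(\ang{\fix}, F|_{\ang{\fix}})$, and observe that each $\ang{T^j}$ for $j\ge 1$ is a rooted tree with sink $n_j$ (its root inside itself), so $M_j$ is a genuine RTM; record that $v_n\in M_j$ satisfies $F(a_n)\cdot v_n = v_{\s p(n)}$ when $n\in T^j\setminus\{n_j\}$ and $F(a_{n_j})\cdot v_{n_j} = 0$ in $M_j$ (since $n_j$ is the root of $\ang{T^j}$). Second, define $\Phi\colon M\to\bigoplus_{j=0}^k M_j$ on the canonical basis by $\Phi(v_n) = v_n^{(j)}$ where $j$ is the unique index with $n\in T^j$ and $v_n^{(j)}$ denotes the canonical basis vector of $M_j$; this is visibly a $\C K$-linear isomorphism. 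Third — the crux — verify $\Phi$ is a $\Lambda$-homomorphism by checking $\Phi(\alpha\cdot v_n) = \alpha\cdot\Phi(v_n)$ for all $\alpha\in Q_1$, $n\in T_0$. By \Cref{rem:heightpreserve} we have $\iota(*) = *$ so $*\in\fix = T^0$; for $n = *$ both sides vanish. For $n\neq *$: if $\alpha\neq F(a_n)$ both sides are $0$; if $\alpha = F(a_n)$ then $\Phi(\alpha\cdot v_n) = \Phi(v_{\s p(n)}) = v_{\s p(n)}^{(j')}$ where $n\in T^{j'}$... no — $\s p(n)$ may lie in a \emph{different} piece $T^{j''}$, and we need $\alpha\cdot v_n^{(j')} = v_{\s p(n)}^{(j'')}$ inside the direct sum, which forces $j' = j''$ \emph{unless} $n = n_{j'}$ is a root of one of the $\ang{T^j}$'s — but in that case $\s p(n_{j'})\in\fix = T^0$, and $\alpha\cdot v_{n_{j'}}^{(j')} = 0$ in $M_{j'}$ while $v_{\s p(n_{j'})}^{(0)}\neq 0$ in $M_0$, so $\Phi$ is \emph{not} a homomorphism as naively defined!

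So the plan must be corrected: the right map to use is not the naive $\Phi$ but the composite of the idempotent structure. Here is the fix. The idempotent $\iota$ induces, via \Cref{idemtoidem}, an idempotent endomorphism $\C I$ of $M$ with $\C I(v_n) = v_{\iota(n)}$; then $M \cong \im(\C I)\oplus\ker(\C I)$, and the plan is to identify $\im(\C I)\cong M_0$ and $\ker(\C I)\cong\bigoplus_{j=1}^k M_j$, and then iterate. More precisely: $\im(\C I)$ has basis $\{v_n \mid n\in\fix\}$ — indeed $\C I(v_n) = v_n$ for $n\in\fix$, and for $n\notin\fix$, $\C I(v_n) = v_{\iota(n)}$ with $\iota(n)\in\fix$ since $\iota$ is idempotent — and the submodule structure on $\im(\C I)$ matches $M_0$ because for $n\in\fix\setminus\{*\}$ we have $\s p(n)\in\fix$ (as $\ang{\fix}$ is a subtree by \Cref{rem:heightpreserve}), so $F(a_n)$ maps $v_n$ to $v_{\s p(n)}$ within $\im(\C I)$. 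On the other hand $\ker(\C I) = \mathrm{span}\{v_n - v_{\iota(n)} \mid n\in T_0\setminus\fix\}$; I would decompose $T_0\setminus\fix$ into the components $T^1,\dots,T^k$ and argue that $\ker(\C I)$ is the direct sum of the subspaces $K_j := \mathrm{span}\{v_n - v_{\iota(n)} \mid n\in T^j\}$, each of which is a submodule (the action of $F(a_n)$ on $v_n - v_{\iota(n)}$ gives $v_{\s p(n)} - v_{\iota(\s p(n))}$, and $\s p(n)$ stays in $T^j$ as long as $n\neq n_j$ since $\ang{T^j}$ is connected and $\s p$-closed except at its root; at $n = n_j$ we get $v_{\s p(n_j)} - v_{\iota(\s p(n_j))} = 0$ because $\s p(n_j)\in\fix$). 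Finally I would exhibit an isomorphism $K_j \xrightarrow{\sim} M_j$ sending $v_n - v_{\iota(n)}\mapsto v_n^{(j)}$ for $n\in T^j$ — this is well-defined and bijective since the $v_n$ ($n\in T^j$) are linearly independent and distinct from all $v_{\iota(n)}$ ($\iota(n)\in\fix$), and it is a module map by the computation just given, noting that at the root $n_j$ both sides vanish. Combining, $M \cong \im(\C I)\oplus\ker(\C I)\cong M_0\oplus\bigoplus_{j=1}^k M_j = \bigoplus_{j=0}^k M_j$.

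The main obstacle is precisely this boundary bookkeeping at the roots $n_j$ of the components $\ang{T^j}$: one must be careful that $F(a_{n_j})$ annihilates the generator of $M_j$ (because $n_j$ is the root/sink of $\ang{T^j}$) and simultaneously annihilates $v_{n_j} - v_{\iota(n_j)}$ inside $\ker(\C I)$ (because $\iota$ fixes $\s p(n_j)$), so that the two module structures genuinely agree; getting the definitions of the pieces and the direct-sum decomposition of $\ker(\C I)$ to line up requires care but no deep idea. Everything else — linear independence, dimension count, and the routine verification that $F(a_n)$ acts as claimed — is straightforward from the combinatorics of rooted trees and the fact that $F\circ\iota = F$.
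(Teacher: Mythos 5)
Your final argument is correct and is essentially the paper's own proof: the paper works with the complementary idempotent $\boldsymbol{1}_M-\C I$ and identifies $\ker(\boldsymbol{1}_M-\C I)=M_0$ and $\im[\boldsymbol{1}_M-\C I]\cong\bigoplus_{j=1}^k M_j$ via $v_n\mapsto v_n-v_{\iota(n)}$, which are exactly your $\im[\C I]$ and $\ker(\C I)$ with the same identifying map and the same boundary check at the roots $n_j$. The only cosmetic difference is that you split $\ker(\C I)$ into the summands $K_j$ explicitly before matching them with the $M_j$, whereas the paper defines the map on $\bigoplus_{j=1}^k M_j$ in one step.
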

\begin{proof}
Let $\C I:M\to M,\ \C I(v_n)\coloneqq v_{\iota(n)}$ be the non-trivial idempotent homomorphism obtained from \Cref{idemtoidem}. Consider the idempotent map $\C I'\coloneqq(\boldsymbol{1}_M-\C I):M\to M$. Then we have $$M\cong\ker(\C I')\oplus \im[\C I'].$$

Note that $$\ker(\C I')=\{m\in M\mid(\boldsymbol{1}_M-\C I)(m)=0\}=\mathrm{span}_{\C K}\{v_n\mid n\in\fix\}=M_0,$$ and $$\im[\boldsymbol{1}_M-\C I]=\mathrm{span}_{\C K}\{v_n-v_{\iota(n)}\mid n\in T_0\}=\mathrm{span}_{\C K}\{v_n-v_{\iota(n)}\mid n\in T_0\setminus\fix\}.$$

Define a vector space morphism $\C H:\bigoplus_{j=1}^{k}M_j\to\im[\boldsymbol{1}_M-\C I]$ on the basis elements as $v_n\mapsto (v_n-v_{\iota(n)})$. It is clear that $\C H$ is an isomorphism of vector spaces. To complete the proof that $\C H$ is a module homomorphism, we need to show that $$\C H(\alpha\cdot v_n)=\alpha\cdot(v_n-v_{\iota(n)})\text{ for }n\in T_0\setminus\fix\text{ and }\alpha\in Q_1.$$

We first show that the left side is non-zero if and only if the right side is so too.

On the one hand, injectivity of $\C H$ gives $\C H(\alpha\cdot v_n)\neq 0$ if and only if $\alpha\cdot v_n\neq0$ in $\bigoplus_{j=1}^k M_j$ if and only if $n\in\bigsqcup_{j=1}^k (T^j\setminus\{n_j\})$ and $\alpha=F(a_{n})$ if and only if $\s p(n)\notin\fix$ but $\alpha=F(a_{n})$. On the other hand, $\alpha\cdot(v_n-v_{\iota(n)})\neq0$ if and only if $\alpha\cdot v_n\neq\alpha\cdot v_{\iota(n)}$ in $M$. 

If $\s p(n)\notin\fix$ and $\alpha=F(a_{n})$, then $\alpha\cdot v_n=v_{\s p(n)}$, $v_{\s p(n)}\neq v_{\s p(\iota(n))}$ and $\alpha\cdot v_{\iota(n)}\in\{0,v_{\s p(\iota(n))}\}$ so that $\alpha\cdot v_n\neq\alpha\cdot v_{\iota(n)}$ in $M$. 

If $\s p(n)\in\fix$ then $\s p(n)=\s p(\iota(n))$ and $F(a_n)=F(a_{\iota(n)})$. Thus, if $\s p(n)\in\fix$ and $\alpha=F(a_n)$, then $\alpha\cdot v_n=v_{\s p(n)}=v_{\s p(\iota(n))}=\alpha\cdot v_{\iota(n)}$ in $M$. Also since $F(a_n)=F(\iota(a_n))=F(a_{\iota(n)})$, if $\s p(n)\notin\fix$ but $\alpha\neq F(a_n)$, then $\alpha\cdot v_n=0=\alpha\cdot v_{\iota(n)}$.

Therefore, it only remains to show that $$\C H(F(a_n)\cdot v_n)=F(a_n)\cdot(v_n-v_{\iota(n)})\text{ if }\s p(n)\notin\fix.$$ Indeed, we have $\C H(F(a_n)\cdot v_n)=\C H(v_{\s p(n)})=v_{\s p(n)}-v_{\iota(\s p(n))}=F(a_n)\cdot v_n-F(a_{\iota(n)})\cdot v_{\iota(n)}=F(a_n)\cdot (v_n-v_{\iota(n)})$, where the last equality uses $F(a_n)=F(\iota(a_n))=F(a_{\iota(n)})$.
\end{proof}
This decomposition result could be used iteratively for $\ang{\fix}$ and each $\ang{T^j}$ by finding non-identity idempotent quiver endomorphisms of $\fix$ and $T^j$ respectively until no such non-identity idempotents exist.
\begin{exmp}
Recall that for the RTM $M$ from \Cref{exmp: RTM}, we described all the GGMs from $M$ to itself in \Cref{exmp: all GGMs}. Note that $(4,2,1)\in\C G^2$ satisfies $\s p(4)=\s p(2)=1$ and $F(a_4)=F(a_2)$, where $\C G^2$ is shown in \Cref{fig:G2}. Then by violation of \Cref{thm: main A}(3), we conclude that $M$ is decomposable. The associated non-identity idempotent quiver morphism $\iota\colon T\to T$ guaranteed by the violation of \Cref{thm: main A}(1) is given by $\iota(1)=1,\iota(2)=2,\iota(3)=3,\iota(4)=2,\iota(5)=5$ and the $\iota(a_n)=a_{\iota(n)}$ for all $n\in T_0\setminus\{1\}$. It is readily verified that $\iota$ is a non-identity idempotent quiver morphism  satisfying $F\circ\iota=F$. Therefore, \Cref{cor: 1} yields a decomposition of the RTM $M$ as $M_0\oplus M_1$, where $M_0$ is the RTM associated with the restriction of $F$ to $\fix$, which is the subtree shown in red in \Cref{fig: decomposition} and $M_1$ is the simple RTM associated with the singleton tree with vertex $4$. Note that $\fix$ does not admit any non-identity idempotent quiver endomorphism, and hence is indecomposable thanks to \Cref{thm: main A}. As a result, $M\cong M_0\oplus M_1$ is a decomposition of $M$ into indecomposables.
\end{exmp}
    \begin{figure}[H]
        \centering
        \[\begin{tikzcd}[ampersand replacement=\&,cramped]
	\textcolor{rgb,255:red,255;green,35;blue,15}{5} \\
	\textcolor{rgb,255:red,255;green,35;blue,15}{2} \& \textcolor{rgb,255:red,255;green,35;blue,15}{3} \& 4 \\
	\& \textcolor{rgb,255:red,255;green,35;blue,15}{1}
	\arrow["{{a_5}}"{description}, color={rgb,255:red,255;green,35;blue,15}, from=1-1, to=2-1]
	\arrow["{{a_2}}"{description}, color={rgb,255:red,255;green,35;blue,15}, from=2-1, to=3-2]
	\arrow["{{a_3}}"{description}, color={rgb,255:red,255;green,35;blue,15}, from=2-2, to=3-2]
	\arrow["{a_4}"{description}, dotted, from=2-3, to=3-2]
\end{tikzcd}\]
        \caption{The $\fix$ is shown in red color}
        \label{fig: decomposition}
    \end{figure}
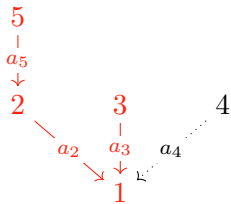

\section{Results for RTMs with a source}\label{sec 4}
The goal of this section is to state and prove the duals of all the important results from the previous two sections (sink case) for rooted trees with a source. Even though most definitions, statements and proofs could be obtained by reversing all the arrows in the sink case, we highlight the major differences in definitions and proofs. In this section, the notation $(T,*)$ (possibly with decoration) will denote a rooted tree with a source $*$, unless otherwise stated.

Let $(T^j,*_j)$ be rooted trees, $F_j:T^j\to(Q,\rho)$ be quiver morphisms and $M_j\coloneqq M(T^j,F_j)$ for $j=1,2$.

The first major change is in the definition of the pullback network associated with the pair $(M_1,M_2)$, where $\C E^1\coloneqq\{\{(n,m),(n',m)\}\mid \s p(n)=\s p(n'), F_1(a_n)=F_1(a_{n'})\}$. As a result, there are only two types of triangles in $\C N[1]$ as shown in \Cref{triatypeop}.
\begin{figure}[H]
    \centering
    \begin{subfigure}[b]{0.45\textwidth}
    \centering
        \[\begin{tikzcd}[ampersand replacement=\&,cramped]
	\& {(\s p(n),\s p(m))} \\
	{(n,m)} \&\& {(n',m)}
	\arrow["{{(a_n,b_{m})}}"', to=2-1, from=1-2]
	\arrow[no head, from=2-1, to=2-3]
	\arrow["{{(a_{n'},b_m)}}", to=2-3, from=1-2]
\end{tikzcd}\]
        \caption{Triangle with $1$ edge}
        \label{arrtriaop}
    \end{subfigure}%
    ~
    \begin{subfigure}[b]{0.45\textwidth}
    \centering
        \begin{tikzcd}
           & (n,m) \\
            (n',m)& & (n'',m)
            \arrow[no head,from=2-1,to=1-2]
            \arrow[no head,from=2-3,to=1-2]
            \arrow[no head, from=2-1,to=2-3]
        \end{tikzcd}
        \caption{Triangle with $3$ edges}
        \label{noarrtriaop}
    \end{subfigure}
    \caption{Triangles in $\C N[1]$}
    \label{triatypeop}
\end{figure}

Accordingly, the definition of completeness changes as follows.
\begin{defn}\label{defn: completenessop}
A subnetwork $\C M=(\C M_0,\C M_1,\C E_{\C M})$ of $\C N[2]$ is said to be \emph{complete} if for each $(n,m,j)\in \C M_0$, we have the following:
\begin{enumerate}
    \item If $(n'\xrightarrow{a_{n}}n)$ in $T^1$, then at least one of the following holds:
    \begin{enumerate}
        \item there exists $(m'\xrightarrow{b_m}m)$ such that $((n',m',j)\xrightarrow{(a_n,b_m,j)}(n,m,j))\in \C M_1$;
        \item there exists $(n\xleftarrow{a_n}n'\xrightarrow{a_{n''}}n'')$ with $F_1(a_n)=F_1(a_{n''})$ and $(n'',m,-j)\in \C M_0$ such that $$\{(n,m,j),(n'',m,-j)\}\in \C E_{\C M}.$$
    \end{enumerate}
    \item If $(m\xrightarrow{b_{m'}}m')$, then there exists $(n\xrightarrow{a_{n'}}n')$ in $T^1$ such that $$((n,m,j)\xrightarrow{(a_{n'},b_{m'},j)}(n',m',j))\in \C M_1.$$
\end{enumerate}
\end{defn}

With this new definition, we get the following analogue of \Cref{GGMgeneration}.
\begin{prop}\label{thm: ghostfreeop}
If $(M_1,M_2)$ is a pair of RTMs with sources, then it is ghost-free, and hence $\Hom$ is the $\C K$-span of the set $\{\C H_{\C G}\mid\C G\text{ is a GGM from }M_1\text{ to }M_2\}$.
\end{prop}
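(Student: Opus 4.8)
The strategy is to mirror the sink-case proof of \Cref{thm: ghostfree} almost verbatim, keeping track of the direction reversal in the definitions of the pullback network, the parent function, and the completeness condition. Observe first that under the source-case definition of $\C E^1$, an edge $\{(n,m),(n',m)\}$ forces $n\ne n'$, $\s p(n)=\s p(n')$, $F_1(a_n)=F_1(a_{n'})$ — so the roles of the two coordinates are swapped relative to the sink case, but the combinatorics of triangles (\Cref{triatypeop}) is otherwise identical. Consequently \Cref{key} and \Cref{only1edge} remain valid in the source case with the same proofs (they only use \Cref{rmk: outdegree}, \Cref{edgetransitive}, and the description of $\triangle$, all of which have exact source-case analogues). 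I would either assert this by duality or, to be safe, reprove the one-line observations \Cref{rmk: outdegree} and \Cref{edgetransitive} for the new $\C E^1$.

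Now suppose for contradiction that $\C M=(\C M_0,\C M_1,\C E_{\C M})$ is a ghost in $\C N[2]$. By the source-case analogue of \Cref{ghostproperties}, $\C E_{\C M}\ne\emptyset$ and $\C M_0$ is involution-invariant; pick $\beta=\{(n,m,j),(n',m,-j)\}\in\C E_{\C M}$, so that $n\ne n'$, $\s p(n)=\s p(n')$, $F_1(a_n)=F_1(a_{n'})$, and $(n',m,j)\in\C M_0$. Connectedness gives an $\C R[2]$-free traversal $\f t=\alpha_k\cdots\alpha_1$ in $\C M$ from $(n,m,j)$ to $(n',m,j)$; since there is no arrow between $n$ and $n'$ in $T^1$ (they have a common parent), $k>1$. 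Because $(a_{n'},b_m,j)\beta\in\C R[2]$ and $\C M$ is $\C R[2]$-free, the arrow $(a_{n'},b_m,j)\notin\C M_1$, and similarly the natural arrow out of the source of $\f t$ is blocked; hence $\alpha_1\notin\C N[2]_1$ and the hypotheses of \Cref{key}(1) apply. Two cases: if $\alpha_1\in\C N[2]_1^{-1}$, repeated application of \Cref{key}(1) forces every $\alpha_i\in\C N[2]_1^{-1}$, so the first coordinate of $\pi(t^2(\f t))$ lies in $\Br(n)_0$ — but $n'\notin\Br(n)_0$, contradiction; if $\alpha_1\in\C E^2$, then \Cref{key}(1) forces $\alpha_i\in\C N[2]_1^{-1}$ for all $i>1$, and since the edge $\alpha_1$ fixes the first coordinate while each subsequent arrow-reverse keeps the first coordinate inside $\Br(n)_0$, again $t^2(\f t)\ne(n',m,j)$. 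Either way $\C H_{\C M}\ne\boldsymbol 0$ would already be contradicted by connectedness failing; more precisely, the traversal cannot reach $(n',m,j)$, contradicting connectedness of $\C M$. This establishes ghost-freeness, and the final assertion — that $\Hom$ is the $\C K$-span of the $\C H_{\C G}$ — then follows immediately from \cite[Theorem~A]{sengupta2025generalised}, exactly as \Cref{GGMgeneration} was deduced from \Cref{thm: ghostfree}.

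The only place requiring genuine care, rather than mechanical dualization, is checking that the swap ``edges fix the \emph{second} coordinate in the source case'' propagates correctly through \Cref{key}: in the sink case an edge has the form $\{(n,m_1),(n,m_2)\}$ (first coordinate fixed), whereas here it is $\{(n_1,m),(n_2,m)\}$ (second coordinate fixed), so the branch $\Br(\cdot)$ that gets used to derive the contradiction is a branch in $T^1$ rather than in $T^2$. I expect this bookkeeping — together with verifying that \Cref{key} really is direction-agnostic once the triangle types are as in \Cref{triatypeop} — to be the main (and only mild) obstacle; everything else is a faithful transcription of the sink-case argument.
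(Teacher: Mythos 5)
Your overall plan --- dualize the proof of \Cref{thm: ghostfree} and then invoke \cite[Theorem~A]{sengupta2025generalised} --- is exactly what the paper does (it omits the proof with precisely this remark), and you correctly locate the delicate point: in the source case the edges of $\C E^1$ fix the second coordinate, so the branch used for the contradiction lives in $T^1$. However, the dualization as you have written it gets the arrow directions wrong, and this is not cosmetic. In the source case the arrows of the pullback quiver point from parent to child (see \Cref{triatypeop}: both arrows of the one-edge triangle point \emph{into} the edge-connected pair from the common parent, and \Cref{rmk: outdegree} dualizes to ``in-degree at most $1$''). Consequently \Cref{key} is \emph{not} direction-agnostic: its source-case version must interchange $\C N[j]_1$ and $\C N[j]_1^{-1}$, because an edge followed by a \emph{reverse} arrow closes a one-edge triangle while an edge followed by a \emph{forward} arrow closes nothing. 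Your assertion that \Cref{key} ``remains valid with the same proofs'' is therefore false as stated.

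This error propagates into the main argument. With $\beta=\{(n,m,j),(n',m,-j)\}$, the link blocked by $\C R[2]$-freeness is the reverse arrow $(a_n,b_m,j)^{-1}$ from $(n,m,j)$ up to $(\s p(n),\s p(m),j)$ (your expression $(a_{n'},b_m,j)\beta$ is not even a composable traversal, since that arrow starts at $(\s p(n'),\s p(m),j)$), so together with the in-degree remark the correct conclusion is $\alpha_1\notin\C N[2]_1^{-1}$, not $\alpha_1\notin\C N[2]_1$. The two cases that must then be treated are $\alpha_1\in\C N[2]_1$ (the dual of \Cref{key} forces every $\alpha_i\in\C N[2]_1$, so the traversal descends into the branch of $(n,m)$ in the pullback forest, which does not contain $(n',m)$) and $\alpha_1\in\C E^2$ (every subsequent link is a forward arrow, so every vertex reached carries sign $-j$). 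The case you actually analyse, $\alpha_1\in\C N[2]_1^{-1}$ with all $\alpha_i\in\C N[2]_1^{-1}$, is precisely the one that cannot occur, and its justification is wrong anyway: reverse arrows ascend to ancestors of $(n,m)$, which do not lie in $\Br(n)_0$. Meanwhile the case that genuinely needs handling, $\alpha_1\in\C N[2]_1$, is absent from your write-up. The repair is routine once the directions are straightened out, but as written the argument does not go through.
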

The proof of the above result is the combination of duals of the proofs of \Cref{thm: ghostfree} and \Cref{GGMgeneration}, and hence omitted.

\begin{rmk}\label{ghostfreegeneral}
The proof of \Cref{thm: ghostfreeop} could be adapted to obtain the same conclusion (with appropriate modifications in the definitions as per \cite{sengupta2025generalised}) when $T^1$ is a rooted tree with a sink and $T^2$ is a rooted tree with a source.
\end{rmk}
We expect that the following question has an affirmative answer.
\begin{que}
Does the conclusion of \Cref{thm: ghostfreeop} hold when $T^1$ is a rooted tree with a source and $T^2$ is a rooted tree with a sink?
\end{que}

The main result in this section is the following with exactly the same statement as the sink version.
\begin{restatable*}{thm}{mainone}(Source version)\label{thm: main A op}
Let $(T,F)$ be a rooted tree (with a source) over $(Q,\rho)$. Then the following are equivalent for the RTM $M\coloneqq M(T,F)$:
\begin{enumerate}
    \item $M$ is indecomposable;
    \item there is no non-identity quiver morphism $\iota\colon T\to T$ satisfying $F\circ \iota=F$;
    \item there does not exist a GGM $\C M=(\C M_0,\C M_1,\C E_{\C M})$ with $(n_1,n_2)\in \pi(\C M)_0$ satisfying $n_1\neq n_2$, $\s p(n_1)=\s p(n_2)$ and $F(a_{n_1})=F(a_{n_2})$. 
\end{enumerate}
\end{restatable*}

The proofs of the implications $(2)\Rightarrow(3)\Rightarrow(1)$ are dual to the sink case. Thanks to \Cref{prop: indecomposability condn assem}, the proof of the contrapositive of $(1)\Rightarrow(2)$, and hence that of the source version of \Cref{thm: main A} is complete once the following proposition is established.
\begin{prop}\label{idemtoidemop}
Suppose $T$ is a rooted tree with a source, $\iota:T\to T$ is a non-identity idempotent quiver morphism satisfying $F\circ\iota=F$. Then the linear map $\C I\colon M\to M$ on the canonical basis of $M$ as $\C I(v_n)\coloneqq\sum_{m\in\iota^{-1}(n)}v_m$ for $n\in T_0$ is a non-trivial idempotent module homomorphism.
\end{prop}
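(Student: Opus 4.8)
The plan is to mimic the proof of \Cref{idemtoidem} but with the roles of fibres and images interchanged, since in the source case an idempotent quiver morphism $\iota$ that preserves $F$ should induce an endomorphism of $M$ by \emph{summing over fibres} rather than by reindexing along $\iota$. First I would observe, as in the sink case, that any quiver morphism preserves the height function, so $\iota(\ast)=\ast$; moreover, since $\iota$ is idempotent, its image is exactly the fixed set $\fix$ and each fibre $\iota^{-1}(n)$ is nonempty precisely when $n\in\fix$. I would then record the key compatibility: since $\iota$ is a quiver endomorphism, $\iota(\s p(m))=\s p(\iota(m))$ for $m\in T_0\setminus\{\ast\}$ (note that in the source case $\s p(m)$ is the \emph{source} of the unique arrow \emph{into} $m$, i.e.\ the parent in the source-rooted tree), and $F\circ\iota=F$ gives $F(a_m)=F(a_{\iota(m)})$ whenever the relevant arrows exist; these are the two facts that make $\C I$ a module map.

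Next I would verify that $\C I$ is a $\Lambda$-module homomorphism by checking $\C I(\alpha\cdot v_n)=\alpha\cdot\C I(v_n)$ for each $n\in T_0$ and each arrow $\alpha\in Q_1$. In the source case $\alpha\cdot v_n$ is the sum of $v_{n'}$ over all arrows $n\xrightarrow{a}n'$ in $T$ with $F(a)=\alpha$ (there is at most one such $n'$ since $T$ is a tree with a source, so each outgoing arrow has a distinct... actually one must be slightly careful: a vertex can have several outgoing arrows, but at most one mapping to a given $\alpha$ is not guaranteed — here I would use that $F$ is a bound quiver morphism together with the tree structure, or simply work with the sum). The left-hand side $\C I(\alpha\cdot v_n)=\sum_{n':\,n\to n',\,F(a)=\alpha}\sum_{m\in\iota^{-1}(n')}v_m$, while the right-hand side $\alpha\cdot\C I(v_n)=\sum_{m\in\iota^{-1}(n)}\alpha\cdot v_m=\sum_{m\in\iota^{-1}(n)}\sum_{m':\,m\to m',\,F(b)=\alpha}v_{m'}$. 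The crux is a bijection between the index sets: given $m\in\iota^{-1}(n)$ and an arrow $m\xrightarrow{b}m'$ with $F(b)=\alpha$, applying $\iota$ yields an arrow $n=\iota(m)\to\iota(m')$ with $F=\alpha$, so $m'$ lands in some fibre $\iota^{-1}(n')$ with $n\to n'$; conversely every $m'$ with $m'\in\iota^{-1}(n')$ for some $n$-successor $n'$ arises this way from a unique arrow out of a unique $m\in\iota^{-1}(n)$, because $T$ being a tree forces the arrow into $m'$ to be unique and its source $\s p(m')$ to satisfy $\iota(\s p(m'))=\s p(\iota(m'))=\s p(n')=n$. This bijection gives the equality of the two sums.

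Finally I would check idempotency and nontriviality of $\C I$. For idempotency: $\C I^2(v_n)=\sum_{m\in\iota^{-1}(n)}\C I(v_m)$, which is zero unless $n\in\fix$ (since only then is some $m\in\fix$, but in fact each $m\in\iota^{-1}(n)$ has $\iota(m)=n$, so $\C I(v_m)=\sum_{p\in\iota^{-1}(m)}v_p$ is nonzero only if $m\in\fix$, i.e.\ $m=n$); when $n\in\fix$ we get $\C I^2(v_n)=\C I(v_n)$ because $n$ itself is the unique element of $\iota^{-1}(n)$ lying in $\fix$ and the remaining terms contribute nothing — more carefully, $\iota^{-1}(n)$ may contain points outside $\fix$, but for $m\in\iota^{-1}(n)\setminus\fix$ the fibre $\iota^{-1}(m)$ is empty, so $\C I(v_m)=0$, leaving $\C I^2(v_n)=\C I(v_n)$. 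Thus $\C I^2=\C I$. For nontriviality: since $\iota\neq 1_T$ there is $m$ with $\iota(m)\neq m$; then $m\notin\fix$ so $\C I(v_m)=0$, showing $\C I\neq\B 1_M$, and since $\iota(\ast)=\ast$ (or since $\iota$ is surjective onto $\fix\neq\emptyset$) $\C I\neq\B 0$. The main obstacle I anticipate is the bookkeeping in the module-homomorphism check — specifically making the fibrewise bijection precise and handling the possibility of multiple outgoing arrows at a vertex — but this is exactly dual to the sink-case argument in \Cref{idemtoidem}, with ``reindex by $\iota$'' replaced by ``sum over $\iota^{-1}$'', so no genuinely new idea is needed.
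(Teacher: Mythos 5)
Your proposal is correct and follows essentially the same route as the paper: reduce the module-homomorphism check to the action of $F(a_n)$, then prove an equality of index sets (your ``fibrewise bijection'' is exactly the paper's identification of $X=\{m\in\iota^{-1}(j)\mid j\in A_{a_n,\s p(n)}\}$ with $Y=\{j\in A_{a_n,m}\mid m\in\iota^{-1}(\s p(n))\}$ using $\iota(\s p(x))=\s p(\iota(x))$ and $F(a_x)=F(a_{\iota(x)})$), and deduce idempotency from $\iota^{-1}(\iota^{-1}(n))=\iota^{-1}(n)$, which is what your observation that fibres over non-fixed points are empty amounts to. No substantive difference from the paper's argument.
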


Before proving the above proposition, we note a key property of $\iota$.
\begin{rmk}\label{inverseidem}
If $\iota\colon T\to T$ is idempotent, then $\iota^{-1}(\iota^{-1}(n))=\iota^{-1}(n)$ for each $n\in T_0$.
\end{rmk}

\begin{proof}
To verify that $\C I$ is a module homomorphism we need to check that $$\C I(\alpha\cdot v_n)=\alpha\cdot\C I(v_n)\quad \text{ for all }n\in T_0\text{ and }\alpha\in Q_1.$$

If $m\in\iota^{-1}(n)$, then $\alpha\cdot v_m\neq0$ if and only if $\alpha=F(a_{m'})=F(\iota(a_{m'}))=F(a_{\iota(m')})$ for some $m'\in T_0\setminus\{*\}$ with $\s p(m')=m$ if and only if $\alpha=F(a_{n'})$ for some $n'\in T_0\setminus\{*\}$ with $\s p(n')=n$ if and only if $\alpha\cdot v_n\neq 0$. Thus, $\alpha\cdot v_n=0$ if and only if $\alpha\cdot\C I(v_n)=\alpha\cdot(\sum_{m\in\iota^{-1}(n)}v_m)=0$. Therefore, it is sufficient to show that
$$\C I(F(a_n)\cdot v_{\s p(n)})=F(a_n)\cdot\C I(v_{\s p(n)})\quad \text{ for all }n\in T_0\setminus\{\ast\}.$$

Set $A_{a_n,x}\coloneqq\{m\in T_0 \mid F(a_m)=F(a_n)\text{ and }\s p(m)=x\}$ for $n,x\in T_0$ so that $F(a_n)\cdot v_{\s p(n)}=\sum_{j\in A_{a_n,\s p(n)}}v_j$. Then $$\C I(F(a_n)\cdot v_{\s p(n)})=\sum_{j\in A_{a_n,\s p(n)}}\left(\sum_{m\in \iota^{-1}(j)}v_m\right)$$ and $$F(a_n)\cdot\C I(v_{\s p(n)})=F(a_n)\cdot\left(\sum_{m\in \iota^{-1}(\s p(n))}v_m\right)=\sum_{m\in \iota^{-1}(\s p(n))}\left(\sum_{j\in A_{a_n,m}}v_j\right).$$ To show that the above two expressions are equal, we need to show the equality of the index sets $X\coloneqq\{m\in \iota^{-1}(j)\mid j\in A_{a_n,\s p(n)}\}$ and $Y\coloneqq\{j\in A_{a_n,m}\mid m\in \iota^{-1}(\s p(n))\}$--here we use the observation that $A_{a_n,m_1}\cap A_{a_n,m_2}=\emptyset$ whenever $m_1\neq m_2$.
    
If $x\in X$, then $x\in \iota^{-1}(j')$ for some $j'\in A_{a_n,\s p(n)}$. Hence, $\iota(x)\in A_{a_n,\s p(n)}$. Using $F\circ\iota=F$ and the fact $\iota$ is a quiver morphism, we get $F(a_n)=F(a_{\iota(x)})=F(\iota(a_x))=F(a_x)$ and $\s p(n)=\s p(\iota(x))=\iota(\s p(x))$. Thus, $x\in A_{a_n,\s p(x)}$, and hence $x\in Y$. Therefore, $X\subseteq Y$.

If $y\in Y$, then $y\in A_{a_n,m}$ for some $m$ satisfying $\iota(m)=\s p(n)$. By the definition of $A_{a_n,m}$, we have $F(a_y)=F(a_n)$ and $m=\s p(y)$. Hence, $\s p(\iota(y))=\iota(\s p(y))=\s p(n)$. Thus, we have $\s p(\iota(y))=\s p(n)$ and $F(a_{\iota(y)})=F(\iota(a_y))=F(a_y)=F(a_n)$, which shows that $\iota(y)\in A_{a_n,\s p(n)}$. This completes the proof that $y\in X$, and hence that of $Y\subseteq X$.

The last two paragraphs show that $\C I$ is a module homomorphism. Moreover, since $\iota$ is idempotent, for any $n\in T_0$, in view of \Cref{inverseidem}, we have $$\C I^2(v_n)=\C I\left(\sum_{m\in \iota^{-1}(n)}v_m\right)=\sum_{m\in \iota^{-1}(n)}\left(\sum_{j\in \iota^{-1}(m)}v_j\right)=\sum_{j\in \iota^{-1}(\iota^{-1}(n))}v_j=\sum_{j\in \iota^{-1}(n)}v_j=\C I(v_n).$$ Hence $\C I$ is idempotent. The non-triviality of $\C I$ is immediate from the fact that $\iota$ is non-identity.
\end{proof}

As an application of the source version of \Cref{thm: main A} we provide a method to recursively construct indecomposable RTMs with sources--the statement as well as the proof is dual to those of \Cref{cor: 2} and hence omitted.
\begin{cor}\label{cor: 2op}
Suppose $T$ is a rooted tree with a source. Suppose $n_1,\cdots,n_k$ is the list of all the vertices of $T$ such that $\s p(n_j)=\ast$, and that $M_j\coloneqq M(\Br(n_j),F\vert_{\Br(n_j)})$ is indecomposable for each $j$. Then the following are equivalent.
\begin{itemize}
\item[(a)] The module $M$ is decomposable.
\item[(b)] There are distinct $i,j\in\{1,\cdots,k\}$ satisfying $F(a_{n_{i}})=F(a_{n_{j}})$, and a quiver morphism $\iota:\Br(n_i)\to \Br(n_j)$ satisfying $\iota(n_i)=n_j$ and $F\vert_{\Br(n_j)}\circ\iota=F\vert_{\Br(n_i)}$.
\end{itemize}
\end{cor}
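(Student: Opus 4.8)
The proof is the exact dual of that of \Cref{cor: 2}, so the plan is to transcribe that argument, replacing each sink-case ingredient by the source-case analogue set up in this section: \Cref{thm: main A op} in place of the sink version of \Cref{thm: main A}, the convention that $a_n$ is the unique arrow \emph{into} $n$ and $\s p(n)$ its source, and $\Br(n)_0=\{n'\mid\text{there is a directed path from }n\text{ to }n'\}$. As in the sink case, the branches $\Br(n_1),\dots,\Br(n_k)$ partition $T_0\setminus\{\ast\}$, and any quiver endomorphism $\iota\colon T\to T$ satisfies $\iota(\ast)=\ast$: this follows from \Cref{rem:heightpreserve} applied to $\iota\colon T=\Br(\ast)\to\Br(\iota(\ast))$ once one notes that $\ast$ is the unique vertex whose branch has height $\hei{}(T)$ (every other vertex $v$ has $\hei{}(\Br(v))\le\hei{}(T)-\hei{}(v)<\hei{}(T)$). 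Consequently $\iota$ carries $a_{n_i}\colon\ast\to n_i$ to $a_{\iota(n_i)}$, so $\iota(n_i)\in\{n_1,\dots,n_k\}$ and $\iota(\Br(n_i))\subseteq\Br(\iota(n_i))$ for each $i$.

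For the implication $(a)\Rightarrow(b)$, assume $M$ is decomposable. Then \Cref{thm: main A op} supplies a non-identity idempotent quiver endomorphism $\iota\colon T\to T$ with $F\circ\iota=F$. Pick $m\in T_0\setminus\{\ast\}$ with $\iota(m)\neq m$, and let $i,j$ be determined by $m\in\Br(n_i)$ and $\iota(m)\in\Br(n_j)$ (note $\iota(m)\neq\ast$ since $\hei{}(\iota(m))=\hei{}(m)\ge 1$); the previous paragraph then forces $n_j=\iota(n_i)$. If $i=j$, then $\iota$ restricts to a quiver endomorphism of $\Br(n_i)$ which is idempotent, non-identity (it moves $m$), and satisfies $F\vert_{\Br(n_i)}\circ\iota\vert_{\Br(n_i)}=F\vert_{\Br(n_i)}$; by \Cref{thm: main A op} this contradicts the indecomposability of $M_i$. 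Hence $i\neq j$, and then $\iota\vert_{\Br(n_i)}\colon\Br(n_i)\to\Br(n_j)$ satisfies $\iota(n_i)=n_j$ and $F\vert_{\Br(n_j)}\circ\iota\vert_{\Br(n_i)}=F\vert_{\Br(n_i)}$, while $F(a_{n_i})=F(\iota(a_{n_i}))=F(a_{n_j})$; this is precisely $(b)$.

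For $(b)\Rightarrow(a)$, given distinct $i,j$ with $F(a_{n_i})=F(a_{n_j})$ and a quiver morphism $\iota\colon\Br(n_i)\to\Br(n_j)$ as in $(b)$, extend $\iota$ to $\iota'\colon T\to T$ by the identity on $\ang{T_0\setminus\Br(n_i)_0}$ together with $a_{n_i}\mapsto a_{n_j}$. I would then run the routine checks: $\iota'$ is a quiver morphism, because the only arrow with exactly one endpoint in $\Br(n_i)_0$ is $a_{n_i}$, whose image endpoints $\ast$ and $n_j$ are consistent with $\iota'$; $\iota'$ is idempotent, since $\iota'(\Br(n_i)_0)\subseteq\Br(n_j)_0$ is disjoint from $\Br(n_i)_0$ and $\iota'$ fixes $\Br(n_j)_0$ pointwise; $\iota'\neq 1_T$ because $\iota'(n_i)=n_j$; and $F\circ\iota'=F$ from the hypotheses. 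Then $M$ is decomposable by \Cref{thm: main A op}.

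I do not anticipate a genuine mathematical obstacle: every step is a mechanical dualization of the argument for \Cref{cor: 2}, and the only point that deserves a line of explicit justification is the equality $\iota(\ast)=\ast$, which must be phrased via \Cref{rem:heightpreserve} and the observation that a rooted-tree quiver morphism cannot strictly decrease heights, so the unique height-zero vertex is fixed.
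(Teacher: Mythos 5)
Your proof is correct and is exactly what the paper intends: the paper omits the proof of this corollary, stating only that it is dual to that of \Cref{cor: 2}, and your argument is a faithful dualization of that proof (including the justification of $\iota(\ast)=\ast$ via height considerations, which the paper handles the same way in the sink case). No gaps.
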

\begin{exmp}\label{exmp: source}
As an application to \Cref{cor: 2op}, we construct an indecomposable RTM over the bound quiver $(L_2,\rho)$, where $L_2$ is shown in \Cref{fig:placeholder}(A) and $\rho$ is the set of all length three paths in $L_2$. Consider the rooted tree $(T,1)$ as shown in \Cref{fig:placeholder}(B). Suppose $F:T\to(L_2,\rho)$ satisfies either $F(a_4)\neq F(a_5)$ or $F(a_2)\neq F(a_3)$. Let $M_j\coloneqq M(\Br(j),F\vert_{\Br(j)})$. First note that both $M_4$ and $M_5$ are simple, and hence indecomposable. Indecomposability of $M_2$ and $M_3$ is clear as they are tree modules. Finally, the assumption on $F$ ensures that Condition $(b)$ of \Cref{cor: 2op} fails, which is equivalent to the indecomposability of $M=M_1$.
\end{exmp}
    \begin{figure}[H]
        \centering
        \begin{subfigure}[b]{0.4\textwidth}
        \centering
            \[\begin{tikzcd}[ampersand replacement=\&,cramped]
	{\boldsymbol{1}}
	\arrow["\alpha", from=1-1, to=1-1, loop, in=55, out=125, distance=10mm]
	\arrow["\beta", from=1-1, to=1-1, loop, in=235, out=315, distance=10mm]
\end{tikzcd}\]
        \caption{Quiver $L_2$}
        \end{subfigure}~
        \begin{subfigure}[b]{0.5\textwidth}
            \[\begin{tikzcd}[ampersand replacement=\&,cramped]
	\& 1 \\
	2 \&\& 3 \\
	4 \&\& 5
	\arrow["{a_2}"', from=1-2, to=2-1]
	\arrow["{a_3}", from=1-2, to=2-3]
	\arrow["{a_4}"', from=2-1, to=3-1]
	\arrow["{a_5}", from=2-3, to=3-3]
\end{tikzcd}\]
    \subcaption{Rooted tree $T$ with source $1$}
        \end{subfigure}
        \caption{Constructing an indecomposable RTM in \Cref{exmp: source}}
        \label{fig:placeholder}
    \end{figure}
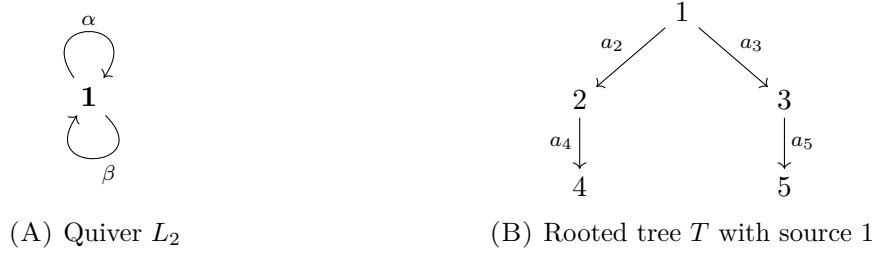

At the end, we state and prove a result to decompose RTMs with a source using the data of a non-identity idempotent quiver morphism--this results is the dual of \Cref{cor: 1}--as an application of \Cref{idemtoidemop}. Let $\iota\colon T\to T$ be an idempotent quiver endomorphism satisfying $F\circ\iota=F$. Set $\im\coloneqq\{\iota(n)\mid n\in T_0\}$. Then $\ang{\im}$ is a rooted subtree of $T$ with source $\ast$. Set $M_0\coloneqq M(\ang{\im},F\vert_{\ang{\im}})$. However, $\ang{T_0\setminus \im}$ need not be connected. Let $T^1,\cdots,T^k$ be a partition of $T_0\setminus\im$ such that each $\ang{T^j}$ is a connected component of $\ang{T_0\setminus\im}$, and hence a rooted tree, say with root $n_j$. Set $M_j\coloneqq M(\ang{T^j},F\vert_{\ang{T^j}})$ for $1\leq j\leq k$.
\begin{lem}\label{cor: 1op}
If $\iota\colon T\to T$ is a non-identity idempotent quiver morphism satisfying $F\circ \iota=F$, then using the notations in the above paragraph, $M\cong\bigoplus_{j=0}^k M_j.$
\end{lem}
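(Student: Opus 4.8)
The plan is to run the argument of \Cref{cor: 1} with the idempotent module endomorphism furnished by \Cref{idemtoidemop} in place of the one from \Cref{idemtoidem}. So let $\C I\colon M\to M$ be the non-trivial idempotent homomorphism with $\C I(v_n)=\sum_{m\in\iota^{-1}(n)}v_m$ for $n\in T_0$, whose existence is guaranteed by \Cref{idemtoidemop}. Since $\C I$ is idempotent, $M\cong\ker(\C I)\oplus\im[\C I]$, so it suffices to prove $\ker(\C I)\cong\bigoplus_{j=1}^{k}M_j$ and $\im[\C I]\cong M_0$.

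First I would identify the two summands as $\C K$-spaces. Expanding on the canonical basis gives $\C I\big(\sum_{n\in T_0}c_nv_n\big)=\sum_{m\in T_0}c_{\iota(m)}v_m$, so $\ker(\C I)=\mathrm{span}_{\C K}\{v_n\mid n\in T_0\setminus\im\}$, while $\im[\C I]=\mathrm{span}_{\C K}\{\sum_{m\in\iota^{-1}(n)}v_m\mid n\in\im\}$; the latter spanning vectors are linearly independent since the fibres $\iota^{-1}(n)$, $n\in\im$, partition $T_0$ into nonempty pieces.

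Next, for the first summand: as $\ker(\C I)$ is a submodule and $T_0\setminus\im=\bigsqcup_{j=1}^{k}T^j$, for $n\in T^j$ and $\alpha\in Q_1$ every vertex $m$ with $\s p(m)=n$ and $F(a_m)=\alpha$ must lie in $T_0\setminus\im$, and being adjacent to $n$ it lies in the connected component $\ang{T^j}$. Hence $N_j\coloneqq\mathrm{span}_{\C K}\{v_n\mid n\in T^j\}$ is a submodule of $M$, $\ker(\C I)=\bigoplus_{j=1}^{k}N_j$ as modules, and the $\C K$-linear bijection $M_j\to N_j$ sending the canonical basis vector of $M_j$ at $n$ to $v_n$ is a module isomorphism, since the children of $n$ in $\ang{T^j}$ are exactly the children of $n$ in $T$ lying in $T^j$, which we have just seen are all of them. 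For the second summand I would check that the $\C K$-linear bijection $\phi\colon M_0\to\im[\C I]$ sending the canonical basis vector of $M_0$ at $n\in\im$ to $\sum_{m\in\iota^{-1}(n)}v_m$ is a module homomorphism. Here one uses that $n\in\im$ forces $\s p(n)\in\im$ — writing $n=\iota(n')$ and using $\s p(\iota(n'))=\iota(\s p(n'))$ — so the parent–child relation of $\ang{\im}$ is the one inherited from $T$, whereas a child $m$ of $n$ in $T$ with $m\notin\im$ has $\iota^{-1}(m)=\emptyset$ and so contributes nothing; verifying $\phi(\alpha\cdot u)=\alpha\cdot\phi(u)$ for $u$ a canonical basis vector of $M_0$ and $\alpha\in Q_1$ then amounts, after this bookkeeping and using $F\circ\iota=F$ together with $\iota(a_l)=a_{\iota(l)}$, to an equality of index sets of exactly the kind carried out in the proof of \Cref{idemtoidemop}.

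Putting the pieces together yields $M\cong\ker(\C I)\oplus\im[\C I]\cong\big(\bigoplus_{j=1}^{k}M_j\big)\oplus M_0=\bigoplus_{j=0}^{k}M_j$, as claimed. I expect the second summand to be the only genuine obstacle: one has to match the module structure of the subtree module $M_0=M(\ang{\im},F\vert_{\ang{\im}})$ with the $Q_1$-action on the vectors $\sum_{m\in\iota^{-1}(n)}v_m$ sitting inside $M$, and this requires the careful comparison of the parent–child relations of $\ang{\im}$ and of $T$ sketched above. Everything else is a routine dualization of the proof of \Cref{cor: 1}.
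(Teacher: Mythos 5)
Your proposal is correct and follows essentially the same route as the paper: the same idempotent $\C I$ from \Cref{idemtoidemop}, the splitting $M\cong\ker(\C I)\oplus\im[\C I]$, the identification $\ker(\C I)\cong\bigoplus_{j=1}^{k}M_j$, and the verification that the map $v_n\mapsto\sum_{m\in\iota^{-1}(n)}v_m$ gives a module isomorphism $M_0\to\im[\C I]$. The index-set equality you defer is exactly the $X=Y$ comparison the paper writes out in full (mirroring the one in the proof of \Cref{idemtoidemop}), so nothing essential is missing.
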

\begin{proof}
Let $\C I:M\to M$ be the non-trivial idempotent module homomorphism from \Cref{idemtoidemop} defined on basis vectors by $v_n\mapsto\sum_{m\in \iota^{-1}(n)}v_{m}$. Then we have $$M\cong\ker(\C I)\oplus \im[\C I].$$

Note that $$\ker(\C I)=\mathrm{span}_{\C K}\{v_n\mid\sum_{m\in \iota^{-1}(n)}v_m=0\}=\mathrm{span}_{\C K}\{v_n\mid \iota^{-1}(n)=\emptyset\}=\mathrm{span}_{\C K}\{v_n\mid n\notin\im\}\cong\bigoplus_{j=1}^k M_j,$$ and
$$\im[\C I]=\{\C I(m)\mid m\in M\}=\mathrm{span}_{\C K}\{\C I(v_n)\mid n\in T_0\}=\mathrm{span}_{\C K}\{\C I(v_n)\mid n\in\im\},$$ where the last equality follows from $\C I(v_n)\neq 0\text{ if and only if }\iota^{-1}(n)\neq \emptyset\text{ if and only if } n\in \im$.

We need to show that the vector space isomorphism $\C I\vert_{M_0}\colon M_0\to\im[\C I]$ is a module isomorphism, i.e., $$\C I\vert_{M_0}(\alpha\cdot v_n)=\alpha\cdot \C I\vert_{M_0}(v_n) \text{ for } n\in \im_0\text{ and } \alpha\in Q_1.$$

On the one hand, since $\C I\vert_{M_0}$ is injective, $\C I\vert_{M_0}(\alpha\cdot v_n)\neq 0$ in $\im[\C I]$ if and only if $\alpha\cdot v_n\neq 0$ in $M_0$ if and only if $\alpha=F(a_j)$ for some $j\in\im$ satisfying $\s p(j)=n$.

On the other hand, $\alpha\cdot\C I\vert_{M_0}(v_n)=\alpha\cdot \C I(v_n)=\alpha\cdot (\sum_{m\in \iota^{-1}(n)}v_m)=\sum_{m\in \iota^{-1}(n)}(\alpha\cdot v_m)$. Hence, $\alpha\cdot\C I\vert_{M_0}(v_n)\neq 0$ if and only if there is $m\in\iota^{-1}(n)$ such that $\alpha\cdot v_m\neq 0$ in $M$ if and only if there is some $j'\in T_0$ such that $\alpha=F(a_{j'})=F(\iota(a_{j'}))=F(a_{\iota(j')})$, $m=\s p(j')$, and hence $n=\iota(m)=\iota(\s p(j'))=\s p(\iota(j'))$.

The previous two paragraphs together imply that it is sufficient to show that 
$$\C I\vert_{M_0}(F(a_n)\cdot v_{\s p(n)})=F(a_n)\cdot\C I\vert_{M_0}(v_{\s p(n)})\text{ for }n\in \im\setminus\{\ast\}.$$ 
We have \begin{align*}\C I\vert_{M_0}(F(a_n)\cdot v_{\s p(n)})&=\C I\vert_{M_0}
\left(\sum_{m\in\im\cap A_{a_n,\s p(n)}}v_m\right)=\sum_{m\in\im\cap A_{a_n,\s p(n)}}\C I(v_m)\\&=\sum_{m\in\im\cap A_{a_n,\s p(n)}}\left(\sum_{m'\in\iota^{-1}(m)}v_{m'}\right)\end{align*} and \begin{align*}
    F(a_n)\cdot \C I\vert_{M_0}(v_{\s p(n)})&=F(a_n)\cdot \C I(v_{\s p(n)})=F(a_n)\cdot\left(\sum_{j\in\iota^{-1}(\s p(n))}v_j\right)\\&=\sum_{j\in\iota^{-1}(\s p(n))}(F(a_n)\cdot v_j)=\sum_{j\in\iota^{-1}(\s p(n))}\sum_{j'\in A_{a_n,j}}v_{j'}.
\end{align*}
To complete the proof, we need to show that equality between the index sets $X\coloneqq\{m'\mid\iota(m')\in A_{a_n,\s p(n)}\}$ and $Y\coloneqq\bigsqcup_{\iota(j)=\s p(n)}A_{a_n,j}$.

If $x\in X$, then $\iota(x)\in A_{a_n,\s p(n)}$. Hence, $F(a_n)=F(a_{\iota(x)})=F(\iota(a_x))=F(a_x)$ and $\s p(n)=\s p(\iota(x))=\iota(\s p(x))$. Therefore, $x\in A_{a_n,\s p(x)}$ and $\iota(\s p(x))=\s p(n)$ so that $x\in Y$.

If $y\in Y$, then $y\in A_{a_n,j}$ for some $j$ satisfying $\iota(j)=\s p(n)$. Then $F(a_n)=F(a_y)=F(\iota(a_y))=F(a_{\iota(y)})$ and $\s p(y)=j$, so that $\iota(\s p(y))=\iota(j)=\s p(n)$. Therefore, $\iota(y)\in A_{a_n,\s p(n)}$ so that $y\in X$. This completes the proof. 
\end{proof}

The above corollary could be used to recursively decompose a RTM with a source into indecomposable direct summands.
\bibliographystyle{alpha}
\bibliography{main}

\begin{thebibliography}{BBS24}

\bibitem[ASS06]{assem2006elements}
Ibrahim Assem, Daniel Simson, and Andrzej Skowronski.
\newblock {\em Elements of the Representation Theory of Associative Algebras: Volume 1: Techniques of Representation Theory}.
\newblock Cambridge University Press, 2006.

\bibitem[BBS24]{bindua2024}
Riju Bindua, Thomas Brüstle, and Luis Scoccola.
\newblock Decomposing zero-dimensional persistent homology over rooted tree quivers.
\newblock {\em arXiv preprint azXiv:2411.19319}, 2024.

\bibitem[CB89]{crawley1989maps}
William~W. Crawley-Boevey.
\newblock Maps between representations of zero-relation algebras.
\newblock {\em Journal of Algebra}, 126(2):259--263, 1989.

\bibitem[Gab81]{gabriel1981covering}
Peter Gabriel.
\newblock The universal cover of a representation-finite algebra.
\newblock In {\em Representations of algebras ({P}uebla, 1980)}, volume 903 of {\em Lecture Notes in Math.}, pages 68--105. Springer, Berlin-New York, 1981.

\bibitem[KM14]{katter_mahrt}
Valentin Katter and Nils Mahrt.
\newblock Reduced representations of rooted trees.
\newblock {\em Journal of Algebra}, 413:41--49, 2014.

\bibitem[Rin98]{ringelexceptional}
Claus~Michael Ringel.
\newblock Exceptional modules are tree modules.
\newblock {\em Linear Algebra and its Applications}, 275-276:471--493, 1998.

\bibitem[SK25]{sengupta2025generalised}
Annoy Sengupta and Amit Kuber.
\newblock Generalised tree modules: Hom-sets and indecomposability.
\newblock {\em arXiv preprint arXiv:2504.18996}, 2025.

\end{thebibliography}
\end{document}